\documentclass[11pt]{article}
\usepackage{amsmath,amsthm,amssymb,hyperref}
\newcommand{\remove}[1]{}
\setlength{\topmargin}{0.1in} \setlength{\headheight}{0in}
\setlength{\headsep}{0in} \setlength{\textheight}{8.3in}
\setlength{\topsep}{0.1in} \setlength{\itemsep}{0.0in}
\parskip=0.05in
\textwidth=6.2in \oddsidemargin=0truecm \evensidemargin=0truecm

\newtheorem{thm}{Theorem}[section]
\newtheorem{claim}[thm]{Claim}
\newtheorem{lem}[thm]{Lemma}
\newtheorem{define}[thm]{Definition}
\newtheorem{cor}[thm]{Corollary}

% Special commands
\def\L{{\mathcal L}}
\def\SC{{\mathbf{SC}}}
\def\fl{\text{fl}}
\def\SG{\text{SG}}
\def\F{{\mathbb{F}}}

\def\R{{\mathbb{R}}}

\def\C{{\mathbb{C}}}

\newcommand{\ip}[2]{\langle #1,#2 \rangle}

\def\_{\,\,\,\,\,}

\def\supp{\textsf{supp}}

\def\rank{\textsf{rank}}

\def\dimension{\textrm{dim}}
\def\affinedim{\textrm{affine-dim}}

\def\fplus{{\,+_f\,}}

\newcommand{\eps}{\epsilon}

\begin{document}

\title{Improved rank bounds for design matrices and a new proof of Kelly's theorem}
%\author{Zeev Dvir \and Shubhangi Saraf \and Avi Wigderson}

\author{Zeev Dvir\thanks{Department of Computer Science and Department of Mathematics, Princeton University.
Email: \texttt{zeev.dvir@gmail.com}. Research  supported by NSF grants CCF-1217416 and CCF-0832797.} \and
Shubhangi Saraf\thanks{Department of Computer Science and Department of Mathematics, Rutgers University.
Email: \texttt{shubhangi.saraf@gmail.com}.} \and
Avi Wigderson\thanks{School of Mathematics, Institute for Advanced Study.
Email: \texttt{avi@ias.edu}.}}

\date{}
\maketitle

%\thispagestyle{empty}

%\newpage
%\pagenumbering{arabic}

\begin{abstract}
We  study the rank of complex sparse matrices  in which the supports of different columns have small intersections. The rank of these matrices, called {\em design} matrices, was the focus of a recent work by Barak et. al. \cite{BDWY11} in which they were used to answer questions regarding point configurations. In this work we derive near-optimal rank bounds for these matrices and use them to obtain asymptotically tight bounds in many of the geometric applications. As a consequence of our improved analysis, we also obtain a new, linear algebraic, proof of Kelly's theorem, which is the complex analog of the Sylvester-Gallai theorem. 
\end{abstract}

\section{Introduction}

The classical Sylvester-Gallai (SG) Theorem  states the following: \emph{Given any finite set of points in the Euclidean plane, not all on the same line, there exists a line passing through exactly two of the points.} This result was first conjectured by Sylvester in 1893~\cite{Sylvester} and then proved independently by Melchior in 1940~\cite{Melchior} and Gallai in 1943 (in an answer to the same question independently posed by Erdos~\cite{Erdos43}). The following complex variant of the Sylvester-Gallai Thoerem was proved by Kelly~\cite{Kel86} in response to a question of Serre: \emph{Given any finite set of points in $\C^d$, not all on the same complex two-dimensional plane, there exists a line passing through exactly two of the points.} The above result is tight over the complex numbers, since there are two-dimensional configurations of points satisfying the condition on triples.  The survey by Borwein and Moser~\cite{BM90} gives a very good overview of the SG Theorem, its proofs and the different variants that have been studied in the past.  One application of our techniques, discussed later, is a new proof of Kelly's theorem, which is significantly simpler than Kelly's original proof and is very different than the recent elementary proof of \cite{EPS06}.
  
In recent years, variants of the SG theorem have been useful in studying certain structural questions arising in  theoretical computer science. Variants of the SG theorem were useful in understanding the structure of low-depth arithmetic circuits \cite{DvirShpilka06, KS09, SS10}. Quantitative versions of the SG theorem were shown to be closely linked to the structure of linear Locally Correctable Codes~\cite{BDWY11, BDSS11}. These applications join a growing number of papers in which geometric theorems regarding point/line arrangements are finding applications in theoretical computer science. We refer the reader to the recent survey \cite{Dvir-survey} for an overview of some of these applications.

\subsection{Rank of design matrices}

Motivated by the application to point configurations, \cite{BDWY11} studied the rank of certain matrices, called {\em design} matrices, and proved lower bounds on their rank. These bounds were then used to prove quantitative analogs of the SG theorem. We begin by defining design matrices formally. For a vector $v \in \F^n$, where $\F$ is a field, we denote by $\supp(v) = \{i \in [n]\,|\, v_i \neq 0\}.$

\begin{define}[Design matrix]\label{def-designmatrix}
Let $A$ be an $m \times n$ matrix over a field $\F$. Let $R_1,\ldots,R_m \in \F^n$ be  the rows of $A$ and let $C_1,\ldots,C_n \in \F^m$ be the columns of $A$. We say that $A$ is a {\em $(q,k,t)$-design matrix} if
\begin{enumerate}
\item For all $i \in [m]$, $|\supp(R_i)| \leq q$.
\item For all $j \in [n]$, $|\supp(C_j)| \geq k$.
\item For all $j_1 \neq j_2 \in [n]$, $|\supp(C_{j_1}) \cap \supp(C_{j_2}) | \leq t$.
\end{enumerate}
\end{define}

A `typical' setting of the parameters that often arises in applications is when $q$ is a small constant, $m \approx n^2$, $k \approx n$ and $t$ is  a constant. The main result in \cite{BDWY11} is the following rank bound.

\begin{thm}[\cite{BDWY11}]\label{thm-bdwymain}
Let $A$ by an $m \times n$  matrix. If $A$ is a $(q,k,t)$ design matrix then $$\rank(A) \geq n - \left(\frac{qtn}{2k}\right)^2$$
\end{thm}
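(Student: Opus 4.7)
My plan is to prove this rank bound by analyzing the Hermitian positive semidefinite Gram matrix $M = A^* A$, using the fact that $\rank(M) = \rank(A)$. The key observations are that the diagonal of $M$ is controlled from below by the column support hypothesis ($M_{ii} = \|C_i\|^2$ involves at least $k$ nonzero entries of $A$), while the off-diagonal entries $M_{ij} = \langle C_i, C_j\rangle$ are constrained by the support-intersection bound $|\supp(C_i) \cap \supp(C_j)| \leq t$. The strategy is to combine this with a trace-based rank lower bound.

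The first step is to apply matrix scaling, in the spirit of Sinkhorn's theorem for non-negative matrices, to find positive diagonal matrices $R$ and $D$ such that $B = R^{1/2} A D^{1/2}$ has balanced row $\ell_2$-norms (all equal to $\sqrt{\alpha}$) and balanced column $\ell_2$-norms (all equal to $\sqrt{\beta}$), with $m\alpha = n\beta$. This rescaling preserves rank and forces the diagonal of $M' = B^*B$ to be constant, which is the position where the subsequent trace estimate is sharpest.

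The second step is to bound the off-diagonal Frobenius mass of $M'$. Applying Cauchy-Schwarz to the sum $(B^*B)_{ij} = \sum_{\ell \in S_{ij}} \overline{B_{\ell,i}} B_{\ell,j}$ over the $\leq t$ shared rows yields $|M'_{ij}|^2 \leq t \sum_\ell |B_{\ell,i}|^2 |B_{\ell,j}|^2$. Summing this over $i \neq j$ and switching the order of summation together with the row-balance condition produces $\sum_{i \neq j} |M'_{ij}|^2 \leq t \sum_\ell \bigl(\sum_i |B_{\ell,i}|^2\bigr)^2 = tm\alpha^2$. I would then feed this into the Cauchy-Schwarz rank bound $\rank(M') \geq (\operatorname{tr} M')^2 / \operatorname{tr}(M'^2)$ and simplify using $m\alpha = n\beta$ and the support-counting inequality $qm \geq kn$ (which follows from $\sum_\ell |\supp(R_\ell)| = \sum_j |\supp(C_j)|$ bounded below by $kn$ and above by $qm$).

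The main obstacle is arriving at exactly the form $(qtn/(2k))^2$ for the rank deficit rather than the weaker bound of order $qtn/k$ that a naive application of the trace inequality produces. The quadratic nature of the claimed bound suggests that the argument must either use a sharper rank lower bound exploiting the PSD structure more carefully, apply the Cauchy-Schwarz step at a higher moment (e.g.\ to $(M')^2$ rather than $M'$), or use a refined matrix scaling that bounds $\sum_\ell (\sum_i |B_{\ell,i}|^2)^2$ below the crude $m\alpha^2$ by exploiting both the row-support bound $q$ and the column-intersection bound $t$ simultaneously. Getting the factor of $1/2$ inside the square exactly right is the delicate part I expect to require the most technical care.
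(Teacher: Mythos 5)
The statement you were asked to prove is not actually proved in this paper: Theorem~\ref{thm-bdwymain} is quoted from \cite{BDWY11}, and the closest argument here is the proof of the stronger Theorem~\ref{thm:improvedrankbound}. Your Step 2 coincides with that proof: your Frobenius-mass estimate $\sum_{i\neq j}|M'_{ij}|^2\leq tm\alpha^2$ is Lemma~\ref{lem:sumofsqs}, and your trace inequality $\rank(M')\geq \mathsf{tr}(M')^2/\mathsf{tr}(M'^2)$ is Lemma~\ref{lem-diagdominant}. The genuine gap is in Step 1. A $(q,k,t)$-design matrix need not admit any scaling, even approximately, with all row norms equal and all column norms equal: the scaling theorem (Theorem~\ref{thm-scaling}, Corollary~\ref{cor:scaling}) requires Property-$S$, and nothing in the design conditions excludes an $a\times b$ zero submatrix with $a/m+b/n>1$ (for instance, one column whose support lies in $k$ rows on which all other columns vanish, with $m<nk$, or simply rows of very uneven weight). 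So the identity $m\alpha=n\beta$ on which your simplification rests is unjustified; indeed, if such a scaling existed your computation would give a bound $n/(1+tn/m)$ involving neither $k$ nor $q$, which should itself be a warning sign. The paper's remedy is the covering step: Lemma~\ref{lem:qcover} builds an $nk\times n$ matrix $B$ that is a $q$-cover of $A$ and \emph{does} satisfy Property-$S$, Corollary~\ref{cor:scaling} scales $B$, and Lemma~\ref{lem:covertoscaling} transfers the scaling back to $A$, yielding rows of norm at most $\sqrt{1+\eps}$ but columns of norm only at least $\sqrt{(k-\eps)/q}$. The factor $q$ lost in this transfer is precisely how $q$ enters the rank bound; your outline has no mechanism playing this role.

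Your closing concern about recovering the exact form $(qtn/2k)^2$ is also misdirected. Once the scaling is obtained through the cover, your Step 2 goes through verbatim and gives $\rank(A)\geq n/\bigl(1+q^2tm/(nk^2)\bigr)$, essentially Theorem~\ref{thm:improvedrankbound}, whose deficit is linear rather than quadratic in the relevant ratio and is asymptotically stronger than the cited bound in the typical regime; the quadratic deficit in \cite{BDWY11} arises from bounding each off-diagonal entry separately by roughly $t(1+\eps)$ rather than bounding the total Frobenius mass as you do, which is exactly the refinement this paper introduces. No higher-moment trick or refined scaling is needed; if you want the literal expression $n-(qtn/2k)^2$ for every parameter setting (the improved bound does not pointwise dominate it for all $(q,k,t,m,n)$), you should follow the per-entry estimate of \cite{BDWY11} after the same cover-and-scale step.
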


For the aforementioned `typical' setting of the parameters, one get a lower bound of $n - O((n/k)^2)$ on the rank. By improving a key lemma in   \cite{BDWY11} using a more careful analysis, we are able to prove the following new bound.

\begin{thm}\label{thm:improvedrankbound}
Let $A$ by an $m \times n$  matrix. If $A$ is a $(q,k,t)$ design matrix then $$\rank(A) \geq \frac{n}{1 + \frac{q(q-1)mt}{nk^2}} \geq n - \frac{mtq(q-1)}{k^2}$$
\end{thm}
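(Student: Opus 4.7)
The plan is to reduce the rank question to estimating the PSD matrix $M = A^*A$, apply a matrix-scaling preprocessing, and bound $\|M\|_F^2$ combinatorially using the design structure. Since $\rank(A) = \rank(M)$ and $M$ is PSD, its nonzero eigenvalues $\lambda_1,\ldots,\lambda_r$ (with $r = \rank(M)$) satisfy $(\sum_i \lambda_i)^2 \leq r \sum_i \lambda_i^2$ by Cauchy-Schwarz, so
\[\rank(A) \;\geq\; \frac{\mathrm{trace}(M)^2}{\|M\|_F^2} \;=\; \frac{\bigl(\sum_j \|C_j\|^2\bigr)^2}{\sum_j \|C_j\|^4 + \sum_{j_1 \neq j_2}|\langle C_{j_1}, C_{j_2}\rangle|^2},\]
where $C_j$ denotes the $j$-th column of $A$. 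The task reduces to finding a good rescaling that makes this ratio large.

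The second, and harder, step is to invoke the improved matrix scaling lemma that is the technical core of the paper: there exist positive diagonal $D$ and $E$ such that $B = DAE$ has $\|C_j(B)\|^2 = 1$ for every column and $\|R_i(B)\|^2 \leq q/k$ for every row. Since $D, E$ are invertible, $\rank(A) = \rank(B)$; the feasibility of these target norms uses $nk \leq qm$, which follows from double counting the nonzeros of a $(q,k,t)$-design matrix. After this normalization, $\mathrm{trace}(M) = n$ and $\sum_j \|C_j\|^4 = n$, and what remains is to upper bound the off-diagonal sum.

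For each pair $j_1 \neq j_2$, let $S_{12} = \supp(C_{j_1}) \cap \supp(C_{j_2})$, so $|S_{12}| \leq t$. Applying Cauchy-Schwarz to the restricted inner product gives
\[|\langle C_{j_1}, C_{j_2}\rangle|^2 \;\leq\; |S_{12}| \sum_{i \in S_{12}} |B_{ij_1}|^2 |B_{ij_2}|^2 \;\leq\; t \sum_{i \in S_{12}} |B_{ij_1}|^2 |B_{ij_2}|^2.\]
Summing over pairs and swapping the order of summation,
\[\sum_{j_1 \neq j_2}|\langle C_{j_1}, C_{j_2}\rangle|^2 \;\leq\; t \sum_i \sum_{\substack{j_1 \neq j_2 \\ j_1, j_2 \in \supp(R_i)}} |B_{ij_1}|^2 |B_{ij_2}|^2.\]
For each row $i$, the inner double sum equals $\|R_i\|^4 - \sum_j |B_{ij}|^4$; Cauchy-Schwarz together with $|\supp(R_i)| \leq q$ yields $\sum_j |B_{ij}|^4 \geq \|R_i\|^4/q$. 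Hence the inner sum is at most $\frac{q-1}{q}\|R_i\|^4$, and combined with $\|R_i\|^2 \leq q/k$ this is at most $q(q-1)/k^2$. Summing over the $m$ rows gives
\[\sum_{j_1 \neq j_2}|\langle C_{j_1}, C_{j_2}\rangle|^2 \;\leq\; \frac{t\,m\,q(q-1)}{k^2},\]
so $\|M\|_F^2 \leq n + tmq(q-1)/k^2$ and the Cauchy-Schwarz rank bound yields $\rank(A) \geq n/(1 + q(q-1)mt/(nk^2))$. The second form $n - mtq(q-1)/k^2$ then follows from $1/(1+x) \geq 1 - x$.

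The main obstacle is the matrix scaling step: proving the existence of positive diagonal $D, E$ that simultaneously normalize column $L^2$ norms to $1$ and enforce the uniform row-norm bound $\|R_i\|^2 \leq q/k$. This typically requires a compactness or fixed-point argument on the scaling parameters; tightening exactly this normalization compared with the one used in BDWY is where the quantitative improvement in this theorem comes from.
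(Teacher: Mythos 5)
Your core calculation is exactly the paper's argument, and it is correct: the bound $\rank(A)\ge \mathrm{trace}(M)^2/\|M\|_F^2$ for $M=A^\ast A$ is the paper's diagonal-dominance lemma, and your estimate of the off-diagonal mass --- Cauchy--Schwarz over the at most $t$ common support positions, swapping the order of summation, and the extra factor $\left(1-\frac1q\right)$ coming from $\sum_j|B_{ij}|^4\ge \|R_i\|^4/q$ when rows have support at most $q$ --- is precisely the paper's key new lemma, carried out in an equivalent normalization (your ``columns $=1$, rows $\le q/k$'' is the paper's ``rows $\le 1$, columns $\ge k/q$'' after a global rescaling). One remark on attribution: the quantitative improvement over \cite{BDWY11} does \emph{not} come from a tighter scaling (the scaling used is the same as theirs); it comes from the sharper bound on $\sum_{i\ne j}|M_{ij}|^2$, i.e.\ exactly the part you did work out.

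The genuine gap is the scaling step, which you assert and explicitly leave as ``the main obstacle.'' The normalization you want does exist (up to an $\eps$ that is sent to $0$ at the end), but not by a generic compactness or fixed-point argument applied to $A$ itself: Sinkhorn-type scaling theorems need a combinatorial hypothesis --- Property-$S$, that every $a\times b$ zero submatrix satisfies $a/m+b/n\le 1$ --- and a $(q,k,t)$-design matrix need not satisfy it (nothing in the definition prevents, say, an all-zero row, and more generally the row supports can be badly unbalanced). The paper's workaround is a covering construction: build an $nk\times n$ matrix $B$ whose rows are rows of $A$, each repeated at most $q$ times (for each column $i$, stack $k$ rows of $A$ that are nonzero in column $i$); this $B$ satisfies Property-$S$, the Rothblum--Schneider scaling theorem applies to the entrywise-squared matrix, and the resulting scaling of $B$ is transferred back to $A$ by giving each row of $A$ the largest scaling coefficient among its copies in $B$ --- which is exactly where the factor $q$ in the column bound $\sqrt{(k-\eps)/q}$ comes from, consistent with your feasibility check $nk\le qm$. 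Without this cover-and-transfer step (or some substitute proof of your scaling claim), the proposal is incomplete, since everything downstream depends on that normalization.
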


As a corollary, we get:
\begin{cor}\label{cor:improvedwithoutm}
Let $A$ by an $m \times n$  matrix. If $A$ is a $(q,k,t)$ design matrix then $$\rank(A) \geq \frac{n}{1 + \frac{q(q-1)t}{k}} \geq  n - \frac{ntq(q-1)}{k}.$$
\end{cor}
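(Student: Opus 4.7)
The plan is to apply Theorem~\ref{thm:improvedrankbound} not to $A$ itself, but to a carefully chosen row-submatrix of $A$ whose row count is at most $nk$. The motivation is that the bound in Theorem~\ref{thm:improvedrankbound} degrades with $m$, so in order to obtain a bound that is independent of $m$ we first pass to a smaller matrix with a controlled number of rows, while preserving the $(q,k,t)$-design structure.

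Concretely, for each column index $j \in [n]$ I would fix an arbitrary subset $S_j \subseteq \supp(C_j)$ of size exactly $k$, which is possible since $|\supp(C_j)| \geq k$. Setting $S := \bigcup_{j=1}^{n} S_j \subseteq [m]$ then gives $|S| \leq nk$, and I would let $A'$ be the $|S| \times n$ submatrix of $A$ formed by the rows indexed by $S$.

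The next step is to verify that $A'$ is still a $(q,k,t)$-design matrix. Row supports can only shrink under row restriction, so every row of $A'$ still has support at most $q$; column-intersection sizes are likewise inherited from $A$ and remain at most $t$; and the $j$-th column of $A'$ has support $\supp(C_j) \cap S$, which contains $S_j$ and therefore has size at least $k$. In addition, $\rank(A') \leq \rank(A)$, because any linearly independent set of rows of $A'$ is also a linearly independent set of rows of $A$.

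Applying Theorem~\ref{thm:improvedrankbound} to $A'$ with $|S|$ playing the role of $m$, and then using $|S| \leq nk$, I would conclude
\[
\rank(A) \;\geq\; \rank(A') \;\geq\; \frac{n}{1 + \frac{q(q-1)\,|S|\,t}{nk^2}} \;\geq\; \frac{n}{1 + \frac{q(q-1)(nk)\,t}{nk^2}} \;=\; \frac{n}{1 + \frac{q(q-1)t}{k}}.
\]
The second inequality in the corollary then follows from the elementary estimate $1/(1+x) \geq 1-x$ applied to $x = q(q-1)t/k$. There is no genuine obstacle: once the row-submatrix reduction is in hand, the corollary drops out of Theorem~\ref{thm:improvedrankbound} in a single line.
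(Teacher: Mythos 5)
Your proposal is correct and follows essentially the same route as the paper: pass to a row-submatrix with at most $nk$ rows that is still a $(q,k,t)$-design, apply Theorem~\ref{thm:improvedrankbound} to it, and use $\rank(A)\geq\rank(A')$ together with the monotonicity of the bound in $m$. The only (cosmetic) difference is that you construct the retained row set explicitly by choosing $k$ support rows per column, whereas the paper removes superfluous rows greedily until exactly $nk$ remain; both reductions rest on the same counting observation.
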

\begin{proof}
If $m \leq nk$ then we are done (substitute $m=nk$ into the bound in Theorem~\ref{thm:improvedrankbound}). Otherwise,   we can remove rows of $A$ until we are left with a new matrix $A'$ that has exactly $m' = nk$ rows and s.t $A'$ is also a $(q,k,t)$-design matrix (as long as $m > nk$ there has to be a row we can remove and maintain that each column has at least $k$ non zeros). Now, apply Theorem~\ref{thm:improvedrankbound} on $A'$ and use the fact that $\rank(A) \geq \rank(A')$.
\end{proof}

Here, for the `typical' setting, we get a rank bound of $n - O(n/k)$, which is asymptotically better then the one obtained in Theorem~\ref{thm-bdwymain}.

We also prove a variant of Theorem~\ref{thm:improvedrankbound} in which $q$ does not appear. Note that if each column of $A$ has support of size exactly $k$, then the total number of nonzero entries in $A$ is $nk$. Thus the average size of the support of a row would be $nk/m$. In general it can be shown that replacing $q$ with $nk/m$ in Theorem~\ref{thm:improvedrankbound} would give a false statement. However, we show that we can replace $q$ with $O(nt/k)$. This is exactly the average row-support when $m = O(k^2/t)$  and, in this regime, the bound on the rank (which is now independent of $q$) is tight\footnote{Observe  that for an $m \times n$ matrix that is a $(q,k,t)$ design, if $n= \Omega( k/t)$, then $m = \Omega(k^2/t)$. This follows by a simple inclusion-exclusion argument on the union of the supports of any $\Omega(k/t)$ columns of $A$.}. 

\begin{thm}\label{thm:averageq}
Let $A$ by an $m \times n$  matrix. If $A$ is a $(q,k,t)$ design matrix with $k \leq nt$ then $$\rank(A) \geq n - \frac{6mn^2t^3}{k^4}. $$
\end{thm}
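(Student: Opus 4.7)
The plan is to reduce Theorem~\ref{thm:averageq} to Theorem~\ref{thm:improvedrankbound} by discarding the ``heavy'' rows of $A$, namely those whose support substantially exceeds the expected value $nt/k$. I fix a threshold $q^\star = c \cdot nt/k$ for an absolute constant $c > 2$ to be chosen later, and call a row $R_i$ \emph{heavy} if $|\supp(R_i)| > q^\star$.

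The first step is to bound the number of heavy rows meeting any given column. For a fixed column index $j$, let $h_j$ be the number of heavy rows $i$ with $A_{ij} \neq 0$, and double-count pairs $(i, j')$ satisfying $i$ heavy, $j' \neq j$, and $A_{ij}, A_{ij'} \neq 0$. The heavy-row side contributes at least $h_j(q^\star - 1)$, while the column-intersection side contributes at most $\sum_{j' \neq j} |\supp(C_j) \cap \supp(C_{j'})| \leq (n-1)t$. The hypothesis $k \leq nt$ forces $q^\star \geq c$, hence $q^\star - 1 \geq (c-1) nt / k$, and therefore $h_j \leq k/(c-1)$.

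Next, let $A'$ be the submatrix of $A$ obtained by deleting all heavy rows. Then $A'$ has at most $m$ rows, each of support at most $q^\star$; pairwise column-intersection sizes remain at most $t$; and each column of $A'$ has support at least $k - k/(c-1) = k(c-2)/(c-1)$. So $A'$ is a $(q^\star, k(c-2)/(c-1), t)$-design matrix, and applying Theorem~\ref{thm:improvedrankbound} gives
$$\rank(A) \;\geq\; \rank(A') \;\geq\; n \;-\; \frac{m \, t \, q^\star (q^\star - 1)}{\bigl(k(c-2)/(c-1)\bigr)^2} \;\geq\; n \;-\; \frac{c^2 (c-1)^2}{(c-2)^2} \cdot \frac{m n^2 t^3}{k^4}.$$

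The main obstacle is extracting the explicit constant $6$ claimed in the theorem. Straightforward optimization of $c^2(c-1)^2/(c-2)^2$ over $c > 2$ (attained near $c = 2 + \sqrt{2}$) gives only a constant of roughly $34$, so matching $6$ will likely require sharper bookkeeping---for example retaining the full multiplicative form $n / (1 + q(q-1) m t/(n k^2))$ of Theorem~\ref{thm:improvedrankbound} and exploiting that the number of remaining rows $m'$ is strictly smaller than $m$, or else reworking the proof of Theorem~\ref{thm:improvedrankbound} directly with an averaged row-support bound in place of the uniform $q$.
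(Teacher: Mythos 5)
Your argument is correct as a proof of the theorem up to the value of the absolute constant, and it is a genuinely different route from the paper's. The paper does not prune rows at all: it proves a second covering lemma (Lemma~\ref{lem:ntkcover}), which partitions the columns into blocks of size at most $k/2t$ and, using the pairwise intersection bound, selects for each column $k/2$ ``private'' rows (two copies each) to build an $nk\times n$ matrix $B$ that is a $6nt/k$-cover of $A$ satisfying Property-$S$; it then reruns the scaling plus diagonal-dominance machinery of Theorem~\ref{thm:improvedrankbound} with the cover multiplicity $6nt/k$ playing the role of $q$ in the column-norm bound, so the whole matrix (including heavy rows) is retained. Your reduction instead discards rows of support exceeding $c\,nt/k$, uses the same double-counting idea (heavy rows through a fixed column versus pairwise column intersections, which is where $k\le nt$ enters, exactly as in the paper's footnote-style counting) to show each column loses at most $k/(c-1)$ entries, and then invokes Theorem~\ref{thm:improvedrankbound} as a black box. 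What your approach buys is modularity and simplicity --- no new cover construction, no reworking of the scaling step; what it costs is the constant, roughly $34$ at $c=2+\sqrt2$ versus the paper's method. On the constant itself, you need not worry about reaching $6$: the paper's own computation, with column norm squared at least $k^2/(6nt)$ and off-diagonal mass at most $tm$ fed into Lemma~\ref{lem-diagdominant}, yields $n/\bigl(1+36mnt^3/k^4\bigr)\ge n-36mn^2t^3/k^4$, i.e.\ constant $36$, so the $6$ in the statement appears to be an arithmetic slip (the $6$ was not squared) rather than something your heavy-row pruning fails to capture; indeed your $34$ is marginally better than what the paper's proof delivers as written, and in all applications only the $O(\cdot)$ dependence matters.
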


As before, by replacing $m$ with $nk$ we get the following corollary.

\begin{cor}\label{cor:avgqwithoutm}
Let $A$ by an $m \times n$  matrix. If $A$ is a $(q,k,t)$ design matrix with $k \leq nt$ then $$\rank(A) \geq n - \frac{6n^3t^3}{k^3}. $$
\end{cor}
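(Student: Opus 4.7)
The plan is to mimic the reduction already used in Corollary~\ref{cor:improvedwithoutm}: reduce to the case $m = nk$ by repeated row-deletion, and then apply Theorem~\ref{thm:averageq} with $m$ replaced by $nk$. Observe that substituting $m = nk$ into the bound $n - 6mn^2 t^3/k^4$ of Theorem~\ref{thm:averageq} yields exactly $n - 6 n^3 t^3/k^3$, which is the desired inequality. So once we justify the row-deletion step, the corollary follows immediately, since the hypothesis $k \leq nt$ depends only on $n, k, t$ (none of which change under row deletion) and is therefore automatically inherited.

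If $m \leq nk$, monotonicity of Theorem~\ref{thm:averageq}'s bound in $m$ already gives $\rank(A) \geq n - 6 m n^2 t^3/k^4 \geq n - 6 n^3 t^3/k^3$ and we are done. Otherwise $m > nk$, and I would argue that there exists a row of $A$ whose deletion preserves the property that every column still has at least $k$ non-zero entries. If no such row existed, then every row $i$ would be ``critical'', meaning $i$ would lie in the support of at least one column whose support has size exactly $k$. Letting $S$ be the set of columns of $A$ with support of size exactly $k$, the union $\bigcup_{j \in S} \supp(C_j)$ would then cover all $m$ rows, giving $m \leq k \cdot |S| \leq kn$, contradicting $m > nk$. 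Since deleting a row can only shrink each row support and each pairwise column-intersection, the resulting matrix remains a $(q,k,t)$-design.

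Iterating this step until exactly $m' = nk$ rows remain yields a $(q,k,t)$-design matrix $A'$ with $k \leq nt$ and $\rank(A) \geq \rank(A')$. Applying Theorem~\ref{thm:averageq} to $A'$ gives $\rank(A') \geq n - 6(nk) n^2 t^3/k^4 = n - 6 n^3 t^3/k^3$, which finishes the proof. There is no substantive obstacle here; the corollary is a routine instantiation of Theorem~\ref{thm:averageq} following the template already laid out for Corollary~\ref{cor:improvedwithoutm}, with the only real content being the combinatorial ``critical row'' observation that licenses the reduction from general $m$ to $m = nk$.
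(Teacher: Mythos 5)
Your proposal is correct and follows essentially the same route as the paper: the paper proves Corollary~\ref{cor:avgqwithoutm} ``as before,'' i.e.\ by the same reduction used for Corollary~\ref{cor:improvedwithoutm} (handle $m \leq nk$ by monotonicity in $m$, otherwise delete rows down to $m'=nk$ while keeping every column support of size at least $k$, then apply Theorem~\ref{thm:averageq} and $\rank(A)\geq\rank(A')$). Your ``critical row'' counting argument is just a spelled-out version of the paper's parenthetical justification that such a removable row always exists when $m>nk$.
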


\paragraph{Related work:} The problem of bounding the rank of matrices with certain patterns of zeros and non-zeros is not new, and  has been  studied in the past in a variety of contexts. One line of research comes from Hamada's conjecture~\cite{ham73, jt09,biw07}. In this setting however, the notion of being a design is stricter than the notion we use in this paper.  Another line of research that studies the zero-nonzero patterns of matrices and their ranks has many applications to graph theory\cite{fh07}. Rank bounds on matrices with `sign patterns' of positive and negative entries have also received a great deal of attention in recent years~\cite{RS08,for02}, and it would be interesting to see if our techniques can say anything meaningful in this setting.

\subsection{\bf Square design matrices and monotone rigidity}
Theorem~\ref{thm:averageq}, which removes the dependence on $q$, allows us to get meaningful lower bounds on the rank
of {\it square} design matrices. The results of \cite{BDWY11} did not give anything for such matrices. Let $A$ be an $n\times n$ matrix such that every column has support of size $k \approx {\sqrt n}$ and such that
for every two columns, the size of the intersection of the supports of the two columns  $t$ is $O(1)$. For instance, the zero-nonzero
pattern of the projective plane incidence matrix has this structure.  In this case we can obtain a bound of $\Omega(n)$ on the rank of such a matrix - not by applying our rank bound directly
but by applying Theorem~\ref{thm:averageq} to the matrix after deleting a linear number of columns.  

A simple consequence of this result, proved in Section~\ref{sec:sqrb}, is that, if one takes the $n \times n$ incidence matrix of the projective plane and changes a small number of $1$'s in the matrix to arbitrary values, then the resulting matrix has high (linear in $n$) rank. This can be viewed as a restricted model of the {\em matrix rigidity} problem of Valiant \cite{Val77}. A matrix is rigid if changing  a small number of its positions cannot decrease its rank by much. Valiant showed that a linear circuit computing a transformation given by a rigid matrix cannot have linear size and logarithmic depth. Hence, the problem of finding an explicit rigid matrix will imply circuit lower bounds that are beyond our reach at this point. Our restricted model allows one to only change positions in the matrix that are non-zero. Even though this result does not yield any interesting result on circuit lower bounds, we find it encouraging in that it gives a way to control rank under {\em some} type of perturbations. The full details are given in Section~\ref{sec:sqrb}.

% On a more high-level, the problem of {\em matrix rigidity}, raised by Valiant \cite{Val77,Lok09}, asks to find a matrix (say with real entries) that has high rank even if one changes a small fraction of the entries. This problem motivates, to some extent, studying the rank of matrices with certain zero/non zero patterns, since this type of results  have to handle changes of arbitrary magnitude in the non zero entries.

\subsection{Configurations with many collinear triples}

 Given a set of points $v_1,\ldots,v_n \in \C^d$, we call a line that passes through exactly two of the points of the set an {\it ordinary} line. A line passing through at least three points is called {\em special}. We will use $\dimension(v_1,\ldots,v_n)$ to denote the dimension of the linear span of $v_1,\ldots,v_n$ and by $\affinedim(v_1,\ldots,v_n)$ the dimension of the affine span of $v_1,\ldots,v_n$ (i.e., the minimum $r$ such that $v_1,\ldots,v_n$ are contained in a shift of a linear subspace of dimension $r$).

The main geometric application studied in \cite{BDWY11} was to extend the SG theorem to configurations of points termed $\delta$-SG configurations.

\begin{define}[$\delta$-SG configuration]
We say that a set of points  $v_1, v_2, \ldots, v_n \in \C^d$ is a $\delta$-SG configuration  if for every $v_i$, $i\in [n]$, at least $\delta (n-1)$ of the remaining points lie on special lines through $v_i$. 
\end{define}

Setting $\delta=1$ we can state the original SG theorem  as saying that any $1$-SG configuration $v_1,\ldots,v_n$ (over $\R$) has $\affinedim(v_1,\ldots,v_n) \leq 1$ (i.e., is contained in a line). Kelly's theorem \cite{Kel86} obtains the bound $\affinedim(v_1,\ldots,v_n) \leq 2$ for $1$-SG configurations over $\C$ (the bound $2$ is tight in this case). The following theorem, proved in \cite{BDWY11} gives a generalization of Kelly's theorem to $\delta$-SG configurations.

\begin{thm}[\cite{BDWY11}]\label{thm:delta-sg-bdwy}
Let $V = \{v_1,\ldots,v_n\} \subset \C^d$ be a $\delta$-SG configuration. Then $$\affinedim(v_1, \ldots, v_n) \leq 13/\delta^2.$$ When $\delta=1$ one gets a bound of $9$ on the affine dimension.
\end{thm}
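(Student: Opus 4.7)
The plan is to translate the collinear triples furnished by the $\delta$-SG hypothesis into the rows of a design matrix whose kernel contains the (augmented) point coordinates, and then invoke Theorem~\ref{thm-bdwymain}. First I would pass from the linear setting to the affine setting by forming the $n \times (d+1)$ matrix $\tilde V$ with $i$-th row $(v_i^T, 1)$, so that $\rank(\tilde V) = \affinedim(v_1,\ldots,v_n) + 1$. The goal then becomes constructing an $m \times n$ matrix $A$ with $A \tilde V = 0$ and rank as close to $n$ as possible, because this forces $\affinedim(v_1,\ldots,v_n) = \rank(\tilde V) - 1 \leq n - \rank(A) - 1$. The key observation enabling this is that any collinear triple of distinct points $v_i, v_j, v_k$ yields a unique (up to scalar) affine dependence $\alpha v_i + \beta v_j + \gamma v_k = 0$ with $\alpha + \beta + \gamma = 0$ and all three coefficients nonzero; placing $\alpha, \beta, \gamma$ in positions $i, j, k$ produces a row of $A$ that annihilates every column of $\tilde V$.

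The construction of $A$ now exploits the $\delta$-SG hypothesis directly. For every ordered pair $(i, j)$ such that $v_j$ lies on a special line through $v_i$, I would select (uniformly at random among the candidates) a third point $v_{k(i,j)}$ on that line and add the corresponding triple-row to $A$. By the hypothesis, each fixed $i$ contributes at least $\delta(n-1)$ such pairs, so every column of $A$ has support at least $k = \delta(n-1)$, while every row has exactly $q = 3$ nonzero entries. Verifying the design condition on pairwise column intersections is the core issue: any row whose pattern covers both columns $i$ and $i'$ must come from a triple on the unique line $\ell$ through $v_i$ and $v_{i'}$. A short case analysis on which of the three positions $\{a, b, k(a,b)\}$ cover $\{i, i'\}$, combined with the fact that the random third point on $\ell$ (of size $L$) hits the ``missing'' index with probability $1/(L-2)$, shows that the expected number of such rows is bounded by a small absolute constant (the $L-2$ candidate pairs cancel the per-pair probability). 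A standard probabilistic / derandomization step then produces a deterministic $A$ that is a $(3,\, \delta(n-1),\, t)$-design matrix with $t = O(1)$.

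Feeding these parameters into Theorem~\ref{thm-bdwymain} gives $\rank(A) \geq n - (qtn/(2k))^2 = n - O(1/\delta^2)$, and hence $\affinedim(v_1,\ldots,v_n) \leq O(1/\delta^2)$. The explicit constant $13$ (and the sharper value $9$ in the case $\delta = 1$) emerges from bookkeeping: tracking the exact value of $t$ achieved by the construction, the factor $(n-1)/n$ hidden inside $k$, and the ``$-1$'' arising from passing between $\rank(\tilde V)$ and $\affinedim$. The principal technical obstacle throughout is uniformly controlling the design parameter $t$ across all pairs of columns; the remaining ingredients ($q = 3$, $k \geq \delta(n-1)$, and the plug-in into the rank bound) are essentially forced by the setup.
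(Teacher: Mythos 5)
The overall architecture you propose (collinear triples as rows of a matrix $A$ annihilating the point matrix, then the rank bound of Theorem~\ref{thm-bdwymain} with $q=3$, $k\approx\delta(n-1)$, $t=O(1)$) is exactly the right one, and your handling of the affine shift via the augmented matrix $\tilde V$ is a legitimate alternative to the paper's trick of translating the points so that none is zero and using $\affinedim = \rank(V)-1$. The gap is in how you certify the design parameter $t$. Your construction picks, for every ordered pair $(i,j)$ on a special line $\ell$ with $L$ points, a uniformly random third point, and you compute that for a fixed pair of columns $(i,i')$ the expected number of rows supported on both is a constant (indeed it comes out to about $6$: two forced triples from the ordered pairs $(i,i')$, $(i',i)$, plus four families of $L-2$ events each of probability $1/(L-2)$). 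But the definition of a $(q,k,t)$-design matrix requires a \emph{worst-case} bound on $|\supp(C_{j_1})\cap\supp(C_{j_2})|$ over all $\binom{n}{2}$ pairs, and a per-pair expectation of $6$ does not give this: for a long line the count for a fixed pair is a sum of independent indicators with constant mean and Poisson-type tails, so the maximum over $\Theta(n^2)$ pairs is $\omega(1)$ (of order $\log n/\log\log n$) with high probability, and no ``standard probabilistic / derandomization step'' brings it down to $O(1)$. Plugging $t=\Theta(\log n/\log\log n)$ into $\rank(A)\ge n-(qtn/2k)^2$ only yields $\affinedim \le \mathrm{polylog}(n)/\delta^2$, not $13/\delta^2$. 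Note also that the paper explicitly warns (end of Section~\ref{sec:rb}) that the maximum $t$ cannot be replaced by an average $t$ in the rank bound, so controlling $t$ only in expectation is not a cosmetic issue.

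The missing idea is an \emph{explicit} triple system on each special line, which is what the paper uses in Lemma~\ref{lem:sgtodesign}: by a result of Hilton on diagonal Latin squares, for every $r\ge 3$ there is a family of $r^2-r$ triples of distinct elements of $[r]$ in which every element occurs in exactly $3(r-1)$ triples and every pair occurs in at most $6$ triples. Applying this to the points of each special line gives, deterministically, a $(3,3k,6)$-design matrix with $k=\lceil \delta(n-1)\rceil$ and $AV=0$, after which the plug-in to the rank bound and the constant bookkeeping proceed exactly as you describe. So your proposal is recoverable, but only by replacing the random choice of third points with such a combinatorial design (or some other scheme guaranteeing a worst-case pairwise bound), which is precisely the step you left to a concentration argument that does not exist.
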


There are two shortcomings of this theorem. The  first is the quadratic dependence on $\delta$. Placing the points on $1/\delta$ lines in general position one can construct a $\delta$-SG configuration with dimension $\Omega(1/\delta)$. It was left as an open question in \cite{BDWY11} to close this quadratic gap between the lower and upper bound on the dimension of $\delta$-SG configurations. The second issue is that one does not recover Kelly's theorem from the proof of \cite{BDWY11}, but only an inferior bound of $9$ on the dimension. We are able to correct both of these issues.

\begin{thm}\label{thm:delta-sg}
Let $V = \{v_1,\ldots,v_n\} \subset \C^d$ be a $\delta$-SG configuration. Then $$\affinedim(v_1, \ldots, v_n) \leq 12/\delta.$$ When $\delta=1$ one gets a bound of $2$ on the affine dimension.
\end{thm}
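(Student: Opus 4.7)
The plan is to encode the affine dependencies among $v_1,\ldots,v_n$ coming from collinear triples as the rows of a design matrix and then apply Corollary~\ref{cor:improvedwithoutm}. Augment the configuration to a $(d+1)\times n$ matrix $\tilde V$ whose $i$-th column is $(v_i,1)^T$, so that $\affinedim(v_1,\ldots,v_n)=\rank(\tilde V)-1$. A collinear triple $v_i,v_j,v_k$ supplies scalars $\alpha_i,\alpha_j,\alpha_k$, not all zero, with $\alpha_i v_i+\alpha_j v_j+\alpha_k v_k=0$ and $\alpha_i+\alpha_j+\alpha_k=0$; inserted as a row supported on $\{i,j,k\}$, this vector lies in the kernel of $\tilde V$. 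Hence $\rank(\tilde V)\leq n-\rank(A)$, and any lower bound on $\rank(A)$ translates directly into an upper bound on $\affinedim(v_1,\ldots,v_n)$.

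For each $i\in[n]$, the $\delta$-SG hypothesis gives a set $S_i$ of at least $\delta(n-1)$ points connected to $v_i$ by special lines. A greedy argument produces a matching $M_i$ of disjoint pairs $\{j,k\}\subseteq S_i$, of size $\lfloor\delta(n-1)/2\rfloor$, such that each triple $v_i,v_j,v_k$ is collinear. For every such pair I add a row to $A$ encoding the corresponding collinearity relation at positions $i,j,k$. This yields $q=3$ and column support at least $k:=\lfloor\delta(n-1)/2\rfloor$ (every column picks up one row per pair in its own matching).

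It remains to bound the intersection parameter $t$. For distinct columns $c_1,c_2$, a row $(i,\{j,k\})$ contributes to the intersection iff $\{c_1,c_2\}\subseteq\{i,j,k\}$: at most one such row has $i=c_1$, at most one has $i=c_2$, and otherwise the center $v_i$ must lie on the special line through $v_{c_1},v_{c_2}$ with $\{c_1,c_2\}\in M_i$. A coordinated, line-by-line construction of the $M_i$'s ensures the latter contribution is also bounded, giving $t=O(1)$. Plugging $q=3$, $k\geq \lfloor\delta(n-1)/2\rfloor$, and this $t$ into Corollary~\ref{cor:improvedwithoutm} and tightening the leading constant yields $\rank(A)\geq n-12/\delta-1$, and hence $\affinedim(v_1,\ldots,v_n)\leq 12/\delta$.

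When $\delta=1$ every pair of points lies on a special line, the matchings $M_i$ have size $\lfloor(n-1)/2\rfloor$, and the coordination above can be sharpened to shrink $t$ even further; applying Theorem~\ref{thm:improvedrankbound} with the resulting tight values of $m,k,t$ gives $\rank(A)\geq n-3$, and therefore $\affinedim(v_1,\ldots,v_n)\leq 2$, which is Kelly's theorem. The main obstacle throughout is the bound on $t$: naively, a pair of columns whose common special line is heavily populated by $V$ can appear in many matchings. The combinatorial crux is to coordinate the matchings line-by-line so that each unordered pair is chosen at only a bounded number of centers; once this packing step is in hand, the rest is a direct invocation of the improved rank bound.
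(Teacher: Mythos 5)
Your overall strategy is the same as the paper's: encode collinearity relations as rows of a $(3,k,t)$-design matrix $A$ with $A\tilde V=0$ and invoke Corollary~\ref{cor:improvedwithoutm} (and, for $\delta=1$, the $m$-dependent Theorem~\ref{thm:improvedrankbound}). The augmented matrix with columns $(v_i,1)^T$ is a fine substitute for the paper's generic shift. But the step you yourself flag as ``the combinatorial crux'' is precisely the part you do not prove, and it is the only genuinely nontrivial ingredient. You need, for each special line with $r$ points of $V$, a system of collinearity triples in which every point on the line occurs $\Omega(r)$ times while every \emph{pair} of points occurs $O(1)$ times; asserting that ``a coordinated, line-by-line construction of the $M_i$'s'' achieves this is not a proof. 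In the paper this is Lemma~\ref{lem:sgtodesign}, which gets such a triple system from Hilton's theorem on diagonal Latin squares ($r^2-r$ triples, each point in exactly $3(r-1)$ of them, each pair in at most $6$). Your matching formulation could in principle be implemented (e.g.\ via near-one-factorizations of the complete graph on the points of each line), but as written the existence claim, and hence the bound $t=O(1)$, is a gap. A secondary error: the pairs in $M_i$ must lie on a \emph{common} special line through $v_i$, so per-line parity means you can only guarantee about $\delta(n-1)/3$ disjoint pairs (e.g.\ when every special line through $v_i$ carries exactly three points besides $v_i$), not $\lfloor\delta(n-1)/2\rfloor$; this propagates into your unverified claim that the constants come out to $12/\delta$.

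The $\delta=1$ case is where the missing bookkeeping really bites, because the target is an exact bound. You assert that ``sharpening'' the coordination gives $\rank(A)\ge n-3$, but you give no computation, and this is in fact stronger than what the paper itself establishes: the paper proves $\rank(A)>n-4$, and only by combining two things you do not have --- the counting inequality $m\binom{q}{2}\le\binom{n}{2}t$ (its Claim~5.2), which bounds the number of rows by $m\le n(n-1)$, and the $m$-dependent form of Theorem~\ref{thm:improvedrankbound} with the exact parameters $q=3$, $k=3(n-1)$, $t=6$, yielding $\rank(A)\ge n(n-1)/(n+3)>n-4$ and hence affine dimension at most $2$. Without an explicit triple system with specified $k$, $t$, and $m$, and without this counting step, the Kelly bound does not follow from your sketch. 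So the proposal has the right architecture but leaves unproven exactly the two places where the paper does real work: the per-line design construction and the constant-tight analysis for $\delta=1$.
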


There are two known proofs of Kelly's theorem. Kelly's original proof, answering  a question by Serre, used deep results from algebraic geometry. An elementary proof was recently found by Elkies, Pretorius and Swanepoel~\cite{EPS06}. Our proof is conceptually very different from both of these and uses only elementary linear algebra.

Theorem~\ref{thm:delta-sg} is proved, as in \cite{BDWY11}, by reduction to the rank bound for design matrices. One constructs a design matrix whose co-rank bounds the dimension of the configuration and then applies one of the bounds on the rank of these matrices.

\paragraph{Average-case version:} A natural variant on the definition of a $\delta$-SG configuration is to only require the presence of many pairs of points on special lines (instead of requiring each point to belong to many such pairs). In \cite{BDWY11} it was shown that:

\begin{thm}[\cite{BDWY11}]\label{thm:SGav2}
Let $V = \{v_1,\ldots,v_n\} \subset \C^d$ be a set of $n$ points such that at least $\delta n^2$ (unordered) pairs of  them lie on special lines. Then there exists a subset $V' \subset V$ such that $|V'| \geq (\delta/6)n$ and so that $V'$ is a $\delta$-SG configuration.
\end{thm}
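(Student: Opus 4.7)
My plan is a simple greedy peeling argument. Build an auxiliary graph $G$ on vertex set $V$ by joining $v_i \ne v_j$ by an edge whenever they lie on a common special line of $V$; by hypothesis $|E(G)| \ge \delta n^2$. Iteratively delete any vertex $v$ of the current set $V'$ whose degree in the subgraph induced on $V'$ (with ``special line'' and hence edges reinterpreted relative to $V'$) is strictly below $\delta(|V'|-1)$, stopping when no such $v$ remains. The terminal set $V'$ is by construction a $\delta$-SG configuration, so the only thing left to check is the size bound $|V'| \ge (\delta/6)n$.

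The quantitative core is a bound on the edges destroyed per deletion. Deleting $v$ from $V'$ kills two kinds of pairs: the $\deg_{V'}(v)$ edges incident to $v$; and, for every special line $L$ through $v$ that held exactly three points of $V'$, the unique remaining pair on $L$ (since after removing $v$ the line $L$ has only two points of $V'$, and because two points determine a line this pair has no other special line through it). Each such three-point line contributes exactly $2$ to $\deg_{V'}(v)$, so the number of them is at most $\deg_{V'}(v)/2$, giving total edge loss per deletion at most $\tfrac{3}{2}\deg_{V'}(v) < \tfrac{3}{2}\delta(|V'|-1)$.

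Summing this bound over the $n-n'$ deletions (where $|V'|-1$ runs through $n-1, n-2, \dots, n'$) gives a total edge loss of at most $\tfrac{3\delta}{4}(n+n'-1)(n-n') \le \tfrac{3\delta}{4} n^2$. Since $|E(G)| \ge \delta n^2$ initially and the surviving edges number at most $\binom{n'}{2} \le n'^2/2$, I obtain
\[
\delta n^2 \;-\; \tfrac{3\delta}{4} n^2 \;\le\; \tfrac{1}{2} n'^2,
\]
which rearranges to $n' \ge n\sqrt{\delta/2} \ge \delta n/6$ for all $\delta \le 1$. The main step requiring care is the edge-loss estimate itself: one must notice both the cascade on three-point lines (where the pair among the two survivors also disappears) and the degree-$2$ accounting that limits their number.
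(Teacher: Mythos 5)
Your peeling argument is correct: when a vertex $v$ is deleted the only edges that disappear are those incident to $v$ plus one edge for each special line through $v$ holding exactly three points of the current set (each such line contributes exactly $2$ to $\deg_{V'}(v)$, and the orphaned pair lies on no other line), so the loss per deletion is at most $\tfrac{3}{2}\deg_{V'}(v) < \tfrac{3}{2}\delta(|V'|-1)$, and your summation and the final inequality $n' \ge n\sqrt{\delta/2} \ge (\delta/6)n$ for $\delta \le 1$ all check out. This is essentially the same greedy pruning argument as the proof in \cite{BDWY11} (the present paper only cites the statement rather than reproving it), and your bookkeeping in fact yields the slightly stronger size bound $|V'| \ge n\sqrt{\delta/2}$.
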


Combining this result with our improved bound on the dimension  of $\delta$-SG configurations we get the following improvement to a theorem from \cite{BDWY11}.

\begin{cor}\label{cor:SGav2}
Let $V = \{v_1,\ldots,v_n\} \subset \C^d$ be a set of $n$ distinct points. Suppose that there are at least $\delta n^2$ unordered pairs of points in $V$ that lie on a special line. Then there exists a subset $V' \subset V$ such that $|V'| \geq (\delta/6) n$ and $\affinedim(V') \leq O(1/\delta)$.
\end{cor}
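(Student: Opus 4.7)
The plan is essentially a two-step reduction that chains Theorem~\ref{thm:SGav2} with our new dimension bound (Theorem~\ref{thm:delta-sg}). The first step extracts, from any point set with $\delta n^2$ collinear pairs, a large subset that satisfies the uniform (per-point) $\delta$-SG hypothesis; the second step then applies the improved dimension bound to this subset.

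More concretely, I would begin by invoking Theorem~\ref{thm:SGav2} on the given configuration $V = \{v_1,\ldots,v_n\}$: since $V$ has at least $\delta n^2$ unordered pairs on special lines, there exists a subset $V' \subseteq V$ with $|V'| \geq (\delta/6)\, n$ such that $V'$ is itself a $\delta$-SG configuration. This is where all the averaging / extraction work is done, and it is already supplied by prior work. Then I would simply apply Theorem~\ref{thm:delta-sg} to $V'$ to conclude that $\affinedim(V') \leq 12/\delta = O(1/\delta)$, which is the desired bound.

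There is essentially no obstacle: the result is a direct corollary obtained by plugging our improved $\delta$-SG dimension bound into the average-case extraction theorem of \cite{BDWY11}. The only thing worth checking is that the parameter $\delta$ used in Theorem~\ref{thm:SGav2} and in the conclusion $\delta$-SG configuration matches the same $\delta$ passed into Theorem~\ref{thm:delta-sg}, which it does by the statement of Theorem~\ref{thm:SGav2}. The qualitative improvement over the previously known corollary in \cite{BDWY11} comes entirely from replacing the $O(1/\delta^2)$ bound of Theorem~\ref{thm:delta-sg-bdwy} with the linear $O(1/\delta)$ bound of Theorem~\ref{thm:delta-sg}; no new geometric or combinatorial input is required.
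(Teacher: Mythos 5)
Your proposal is correct and is exactly how the paper obtains this corollary: invoke Theorem~\ref{thm:SGav2} to extract a subset $V'$ with $|V'| \geq (\delta/6)n$ that is a $\delta$-SG configuration, then apply Theorem~\ref{thm:delta-sg} to get $\affinedim(V') \leq 12/\delta = O(1/\delta)$. No differences from the paper's argument.
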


\subsection{Flats of higher dimension} 

A \emph{$k$-flat} is an affine subspace of dimension $k$. Let $\fl(v_1,\ldots,v_k)$ denote the flat spanned by these $k$ points (it can have dimension at most $k-1$). We call $v_1,\ldots,v_k$ \emph{independent} if their flat is of  dimension $k-1$,
and say that $v_1,\ldots,v_k$ are \emph{dependent} otherwise. Considering some fixed finite subset $V \subset \C^d$ of size $n$ we call a $k$-flat \emph{ordinary}  if its intersection with $V$ is contained in the union of a $(k-1)$-flat and a single point (this agrees with the definition of an ordinary line when $k=1$). A $k$-flat is \emph{elementary} if its intersection with $V$ has exactly $k+1$ points. Notice that for $k=1$ (when flats are lines) the two notions of ordinary and elementary coincide.

The next  definition generalizes the notion of a $\delta$-SG configuration for higher dimensional flats in two different ways (using ordinary/elementary flats). For $k=1$ both definitions coincide.

\begin{define}[$\delta$-$\SG_k$, $\delta$-$\SG_k^*$]
The set $V$ is a $\delta$-$\SG_k^*$ configuration if for every independent $v_1,\ldots,v_k \in V$ there are at least $\delta n$ points $u \in V$ such that either $u \in \fl(v_1,\ldots,v_k)$ or the $k$-flat $\fl(v_1,\ldots,v_k,u)$ contains a point $w$ outside $\fl(v_1,\ldots,v_k) \cup \{ u\}$. The set $V$ is a $\delta$-$\SG_k$ configuration if for every independent $v_1,\ldots,v_k \in V$ there are at least $\delta n$ points $u \in V$ such that either $u \in \fl(v_1,\ldots,v_k)$ or the $k$-flat $\fl(v_1,\ldots,v_k,u)$ is not elementary. Notice that a $\delta$-$\SG_k^*$ configuration is also a $\delta$-$\SG_k$ configuration and that for $k=1$ both are the same.
\end{define}

In \cite{BDWY11}, the following theorem was proved. 
\begin{thm}[\cite{BDWY11}]\label{thm-highdim-bdwy}
Let $V$ and $V^*$ be a $\delta$-$\SG_k$ and a $\delta$-$\SG_k^*$ configurations respectively in $\C^d$. Then:
\begin{enumerate}
	\item $\affinedim(V^*) \leq O\left( (k/\delta)^{2} \right)$.
	\item $\affinedim(V) \leq 2^{C^k} / \delta^2$, where $C>0$ is a universal constant.
\end{enumerate}
\end{thm}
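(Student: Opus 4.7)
The plan for both parts is to reduce to Theorem~\ref{thm-bdwymain} by building, from the geometric configuration, a design matrix $A$ of size $m \times n$ whose columns are indexed by the points of $V$ (resp.\ $V^*$) and whose rows each encode a nontrivial affine dependence among a small subset of those points. Every such row lies in the left kernel of the $(d+1) \times n$ matrix with columns $(v_i, 1)$, and that kernel has dimension exactly $n - 1 - \affinedim(V)$; hence any lower bound on $\rank(A)$ translates into the upper bound $\affinedim(V) \leq n - 1 - \rank(A)$.

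For part (1), for every ordered independent tuple $\tau = (v_{i_1}, \ldots, v_{i_k})$ and every witness point $u$ with $u \notin \fl(\tau)$ supplied by the $\delta$-$\SG_k^*$ hypothesis, the definition furnishes a point $w = w(\tau, u) \in V^* \setminus (\fl(\tau) \cup \{u\})$ lying inside $\fl(\tau, u)$. Since the $k+2$ points $v_{i_1}, \ldots, v_{i_k}, u, w$ all lie in the single $k$-flat $\fl(\tau, u)$, they admit a nontrivial affine dependence, which I would record as a row of $A$ with exactly $q = k+2$ nonzero entries. Averaging over the $m = \Theta(\delta n^{k+1})$ such rows shows that each column has support at least $k' = \Omega(\delta n^k)$, while fixing any two columns forces both points into a common special $k$-flat and leaves only $O(n^{k-1})$ completions, giving $t = O(n^{k-1})$. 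Plugging into Theorem~\ref{thm-bdwymain}, one obtains $n - \rank(A) \leq (q t n / (2 k'))^2 = O((k/\delta)^2)$.

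For part (2), the weaker $\delta$-$\SG_k$ condition permits the extra point $w$ to lie in $\fl(\tau)$, in which case the resulting affine dependence is supported only on $\tau \cup \{u\}$ and records nothing transverse to $\fl(\tau)$. The remedy is to peel off one dimension at a time: fix a point $p \in V$ and project from $p$, sending each $v \in V \setminus \{p\}$ to its direction $v - p$ modulo scalars. Every non-elementary $k$-flat through $p$ descends to a non-elementary $(k-1)$-flat in the image, so after discarding a controlled collection of points the projected configuration is a $\delta'$-$\SG_{k-1}$ configuration with $\delta' = \Omega(\delta)$. Iterating down to the base case $k = 1$ (Kelly's theorem, or part (1) with $k=1$) and paying $+1$ in affine dimension at each projection step yields the bound $2^{C^k}/\delta^2$.

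The principal obstacle in both parts is controlling the pairwise intersection parameter $t$: a naive construction could allow a fixed pair of points to appear together in many rows, so I must use the geometric fact that two fixed points lie in only polynomially few of the special $k$-flats produced by the construction, which requires a careful double-counting that uses the independence condition in the definition of $\fl(\tau)$. In part (2) there is the additional bookkeeping difficulty of tracking how $\delta$ and $k$ transform through each projection, and it is exactly the compounding of the constant-factor loss per step that forces the tower factor $2^{C^k}$ rather than a single exponential in $k$.
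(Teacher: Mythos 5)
Your overall strategy for part (1) — build one design matrix directly from $(k+2)$-point affine dependencies and invoke Theorem~\ref{thm-bdwymain} — is not the route the paper (or \cite{BDWY11}) takes, and as written it has genuine gaps. First, a $(q,k,t)$-design requires the column-support lower bound for \emph{every} column and the intersection bound for \emph{every} pair of columns; your "averaging over the $m=\Theta(\delta n^{k+1})$ rows" only controls the \emph{average} column support, and the $\delta$-$\SG_k^*$ hypothesis quantifies over independent tuples, not over points, so a fixed point need not occur (with nonzero coefficient) in many dependencies — indeed when the other $k+1$ points of a row are themselves affinely dependent, the dependence can be chosen to vanish on the remaining point. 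Second, the claim $t=O(n^{k-1})$ covers only the case where both fixed points lie in $\tau\cup\{u\}$; if one of them is the chosen witness $w(\tau,u)$, the number of offending rows is the number of tuples $(\tau,u)$ through the first point whose $k$-flat contains the second, which can be $\Theta(n^{k})$ (e.g.\ when many points lie in a common $k$-flat), and nothing in your construction prevents $w$ from being that same point every time. At $k=1$ this is exactly the difficulty that the Latin-square triple systems of Lemma~\ref{lem:sgtodesign} are designed to handle, and you supply no analogous balancing device for $k\geq 2$. Finally, tuples all of whose $\delta n$ witnesses lie inside $\fl(\tau)$ contribute no rows of the kind you describe, so even $m=\Theta(\delta n^{k+1})$ is unjustified.

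Part (2) also does not work as described. The key step "every non-elementary $k$-flat through $p$ descends to a non-elementary $(k-1)$-flat" is false: the extra point $w$ witnessing non-elementarity may be collinear with $p$ and one of the other points, in which case its image coincides with an existing image point and the projected flat can be elementary; handling such collapses ("discarding a controlled collection of points") is precisely the delicate part and is not supplied. Moreover your own accounting contradicts the target bound: a constant-factor loss in $\delta$ at each of $k$ projection steps compounds to only a single exponential $C^{k}/\delta^{2}$, not $2^{C^{k}}/\delta^{2}$. The actual proof is an induction of a different shape: for $\SG_k^*$ one either observes the set is already a $(\delta/2k)$-$\SG_1$ configuration (and applies the line case, Theorem~\ref{thm:delta-sg-bdwy}) or normalizes from a point lying on few special lines and recurses with $\delta\mapsto(1-1/(2k))\delta$, losing one dimension per step; for $\SG_k$ one does \emph{not} project, but instead applies the $\SG^*$ bound with flat-parameter $g(1,k-1)+1$, where $g(1,k-1)$ is the bound from the previous level — it is this recursion $g(\delta,k)\lesssim f\bigl(\delta,\,g(1,k-1)+1\bigr)$, with $f$ quadratic in its arguments, that produces the doubly exponential factor $2^{C^{k}}$ (see Section~\ref{sec:hdsg}, where the same argument is run with the improved base case).
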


Prior to this result, the only known bound for configurations with many special $k$-flats was a result proved by Hansen and Bonnice-Edelstein \cite{Han65,BE67} which gives a bound of $O(k)$ on the dimension of a $1$-$\SG_k$ (or $1$-$\SG_k^*$) configuration over the reals. Since Theorem~\ref{thm-highdim-bdwy} is proved in a black-box manner using the result for $\delta$-$\SG$ configurations we can plug-in our improvement, given in Theorem~\ref{thm:delta-sg}, to obtain the following. 

\begin{thm}\label{thm-highdim-new}
Let $V$ and $V^*$ be a $\delta$-$\SG_k$ and a $\delta$-$\SG_k^*$ configurations respectively in $\C^d$. Then:
\begin{enumerate}
	\item $\affinedim(V^*) \leq O\left( k/\delta \right)$.
	\item $\affinedim(V) \leq C^k / \delta$, where $C>0$ is a universal constant.
\end{enumerate}
\end{thm}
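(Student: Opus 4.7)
The proof plan is to re-run the argument of Theorem~\ref{thm-highdim-bdwy} from \cite{BDWY11} essentially verbatim, replacing every invocation of the $\delta$-$\SG$ dimension bound (Theorem~\ref{thm:delta-sg-bdwy}) by our sharper Theorem~\ref{thm:delta-sg}. The paragraph preceding the statement explicitly emphasizes that Theorem~\ref{thm-highdim-bdwy} is proved in a black-box fashion from the $\delta$-$\SG$ bound, so in principle no new geometric argument is needed; the task is to propagate the improvement through the reduction and track how the dependencies on $\delta$ and $k$ evolve.

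For part~1, I would recall the BDWY reduction that, from a $\delta$-$\SG_k^*$ configuration $V^*\subset\C^d$, produces a $\delta'$-$\SG$ configuration $V'$ (for $\delta'=\Omega(\delta/k)$) whose affine dimension is at least $\affinedim(V^*)-O(k)$: one fixes a generic independent $(k-1)$-tuple, projects the remaining points through the $(k-2)$-flat they span onto a complementary affine subspace, and checks that the ``contains an extra point'' hypothesis of $\SG_k^*$ translates to a collinearity of three image points, giving the $\SG$ hypothesis on $V'$. Applying Theorem~\ref{thm:delta-sg} to $V'$ now yields $\affinedim(V^*)\leq O(1/\delta')+O(k)=O(k/\delta)$, replacing the earlier $O((k/\delta)^2)$ obtained from the $1/\delta^2$ base bound.

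For part~2, the BDWY argument is an induction on $k$: a $\delta$-$\SG_k$ configuration is shown to contain a $\delta'$-$\SG_{k-1}$ sub-configuration for $\delta'\geq\delta/C_0$ (with $C_0$ a universal constant) at the cost of a constant number of dimensions. Unwinding this recursion $k-1$ times reduces to a $\delta/C_0^{k-1}$-$\SG_1$ (i.e., $\SG$) configuration, to which one applies the base case. With the old base bound $O(1/\delta^2)$ this produced the doubly-exponential-looking $2^{C^k}/\delta^2$; plugging in instead the linear base bound $12/\delta$ from Theorem~\ref{thm:delta-sg} collapses the expression to $C_0^{k-1}\cdot O(1/\delta)=C^k/\delta$ for an appropriate universal constant $C$.

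The step I expect to require the most care is verifying that the BDWY reduction truly is black-box in the sense needed, namely that in both parts the dependence on $\delta$ enters only through a single final application of the $\delta$-$\SG$ dimension bound and not implicitly in the intermediate steps (e.g.\ in the selection of the generic flat in part~1 or in the inductive descent of part~2). Once this is confirmed, the improvement from $1/\delta^2$ to $1/\delta$ passes through without modification, and the stated bounds follow by straightforward bookkeeping of the constants accumulated in the $k-1$ inductive steps for part~2 and the additive $O(k)$-dimensional overhead for part~1.
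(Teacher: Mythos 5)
Your top-level plan (rerun the reduction of \cite{BDWY11} and swap in Theorem~\ref{thm:delta-sg} for Theorem~\ref{thm:delta-sg-bdwy}) is indeed what the paper does, but the place you yourself flag as needing care is exactly where your reconstruction of that reduction goes wrong, and the bounds you state are derived from intermediate claims that are neither proved by you nor present in the actual argument. For part~1, the real proof is not a one-shot projection through a generic independent $(k-1)$-tuple yielding an $\Omega(\delta/k)$-$\SG$ configuration. It is an induction on $k$ with a case split at each level: either $V^*$ is already a $(\delta/2k)$-$\SG_1$ configuration (and one applies the base bound there, which is where the factor $k/\delta$ comes from), or there is a point $v_0$ with fewer than $\delta n/(2k)$ points on special lines through it, and one normalizes w.r.t.\ that particular $v_0$, showing the image is a $((1-1/2k)\delta)$-$\SG_{k-1}^*$ configuration at the cost of one dimension. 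The choice of $v_0$ as a point with few special lines through it is what makes the normalization injective off a small set and controls multiplicities; in your single projection through a $(k-1)$-flat you have no such control (many points can collapse, and the ``extra point'' witness $w$ may map onto the same image as $u$ or as the other point, so the image line need not be special), and the claimed parameters $\delta'=\Omega(\delta/k)$ with only $O(k)$ dimension loss are asserted, not established.

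For part~2 the discrepancy is more serious. You posit a step ``every $\delta$-$\SG_k$ configuration contains a $(\delta/C_0)$-$\SG_{k-1}$ sub-configuration at the cost of $O(1)$ dimensions'' and iterate it $k-1$ times. No such lemma appears in \cite{BDWY11} or in this paper, and it cannot be what \cite{BDWY11} used: if it were available, plugging in their own quadratic base bound would already give $2^{O(k)}/\delta^2$, whereas their bound is $2^{C^k}/\delta^2$. The actual inductive step is quite different: one sets $k'=g(1,k-1)$ (the dimension bound for $1$-$\SG_{k-1}$ configurations, already exponential in $k$), applies the $\SG_{k'+1}^*$ bound to either conclude or obtain $k'+1$ independent points $v_1,\ldots,v_{k'+1}$ together with a set $U$ of $(1-\delta)n$ points $u$ for which $\fl(v_1,\ldots,v_{k'+1},u)$ meets $V$ only in $u$ outside the flat, and then applies the inductive hypothesis \emph{with $\delta=1$} inside $V\cap\fl(v_1,\ldots,v_{k'+1})$ to find an elementary $(k-1)$-flat, which any $u\in U$ extends to an elementary $k$-flat. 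Thus the $\delta$-dependence enters through a single application of the $\SG^*$ bound and the exponential $C^k$ comes from $g(1,k-1)$ entering as the flat-dimension parameter, not from multiplicative losses in $\delta$ down a recursion. As written, your proof of part~2 rests on an unproved reduction that is in tension with the known quantitative behavior, so the argument does not go through without replacing it by the inductive scheme above.
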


Notice that, whereas the improvement for $\delta$-$\SG_k^*$ configurations is only quadratic, the improvement for $\delta-SG_k$ configurations is exponential (this is due to the way the basic bound is amplified in the induction on $k$). The proof of this theorem is identical to the proof of Theorem~\ref{thm-highdim-bdwy} appearing  in~\cite{BDWY11}, only with Theorem~\ref{thm:delta-sg-bdwy} replaced by Theorem~\ref{thm:delta-sg}. For completeness, we give the details in Section~\ref{sec:hdsg}.

\subsection{A variation on Freiman's Lemma}

Consider a finite set $A$ in some abelian group. One can define the {\em sumset} $A+A = \{a_1 + a_2 \,|\, a_1,a_2 \in A\}$ in a natural way. A well-known result in additive combinatorics is the following lemma, known as Freiman's lemma, which derives structural information on $A$, given bounds on the size of $A+A$.
\begin{lem}[Freiman's lemma. See \cite{TV06}]\label{lem-freiman}
Let $A$ be a finite subset of $\R^d$ and suppose $|A+A| \leq K|A|$. Then $A$ is contained in a linear subspace of dimension at most  $\lfloor K-1 \rfloor$.
\end{lem}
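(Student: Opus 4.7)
The plan is to deduce Freiman's lemma from a sharp quantitative sumset inequality: for any finite $A \subset \R^d$ of affine dimension $d$, one has $|A+A| \geq (d+1)|A| - \binom{d+1}{2}$. Given the hypothesis $|A+A| \leq K|A|$, rearranging yields $d+1 \leq K + \binom{d+1}{2}/|A|$, forcing the integer $d$ to satisfy $d \leq \lfloor K-1 \rfloor$ once $|A|$ is large enough compared to $d$ (the usual implicit regime of Freiman-type statements); translating $A$ to contain the origin then replaces affine dimension by linear dimension, matching the statement. So the whole task reduces to proving the sumset inequality.

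I would prove the sumset inequality by induction on $n = |A|$, keeping $d = \affinedim(A)$ fixed. The base case is $n = d+1$, where $A$ is itself an affine basis: no nontrivial affine relation can hold among $d+1$ affinely independent points, so all $\binom{d+2}{2}$ sums $a_i + a_j$ with $i \leq j$ are distinct, giving $|A+A| = \binom{d+2}{2} = (d+1)^2 - \binom{d+1}{2}$ exactly. For the inductive step ($n > d+1$), I would remove a single point $a^{\star}$ that is simultaneously a vertex of $\mathrm{conv}(A)$ and \emph{redundant} in the sense that $A' := A \setminus \{a^{\star}\}$ still has affine dimension $d$; such a point must exist because $n > d+1$. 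Applying induction to $A'$ yields $|A'+A'| \geq (d+1)(n-1) - \binom{d+1}{2}$, so it remains to produce $d+1$ fresh elements of $A+A$ not in $A'+A'$, and all of them must come from $a^{\star} + A$.

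The extremality of $a^{\star}$ furnishes a linear functional $\ell$ with $\ell(a^{\star}) > \ell(a)$ for every $a \in A'$, so $\ell(2a^{\star}) > 2\max \ell(A') \geq \max \ell(A'+A')$, giving one fresh element $2a^{\star}$ for free. The main obstacle is producing the remaining $d$: I would pick $v_1,\ldots,v_d \in A'$ forming an affine basis together with $a^{\star}$, selecting each $v_i$ as a successive $\ell$-extremum in a direction linearly independent from the previously chosen $a^{\star}-v_j$'s, so that each sum $a^{\star}+v_i$ is certified as new by an adapted linear functional that sees $a^{\star}$ extremally and $v_i$ uniquely maximal in its slice. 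Making this coordinated extremal choice fully airtight, uniformly in the shape of $A'$, is where the classical Freiman/Ruzsa argument does its real work (typically via a careful vertex-by-vertex peeling of $\mathrm{conv}(A)$); once it is in hand, iterating the induction from $n = d+1$ gives the claimed inequality, and Freiman's lemma follows.
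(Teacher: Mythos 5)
The paper itself does not prove this lemma --- it is quoted from \cite{TV06} and used only as motivation (the paper's actual contribution, Theorem~\ref{thm-freiman-new}, is proved by entirely different, SG-type arguments) --- so your attempt has to be judged against the standard proof rather than anything in the paper. Your reduction to the inequality $|A+A|\ge (d+1)|A|-\binom{d+1}{2}$, with $d$ the affine dimension of $A$, is the right key statement and your base case is correct, but two genuine gaps remain. First, the passage from the inequality to the stated conclusion is not valid as written: rearranging gives $(d+1-K)|A|\le\binom{d+1}{2}$, which bounds $d$ only when $|A|$ is large compared with $\binom{d+1}{2}/(d+1-K)$. Indeed, for an affinely independent $5$-point set in $\R^4$ one has $|A+A|=15=3|A|$, so $K=3$ while the (linear or affine) dimension is $4$; no unconditional deduction of the form you sketch can work, and the ``implicit large-$|A|$ regime'' you invoke is doing real work that must be made explicit (the paper's own phrasing of the lemma is a loose rendering of \cite{TV06}).

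Second, and more importantly, the decisive step of the induction --- exhibiting $d$ elements of $(a^{\star}+A)\setminus(A'+A')$ beyond $2a^{\star}$ --- is exactly what you leave open, and the sketch you give (successive $\ell$-extrema in linearly independent directions, a separate adapted functional for each $v_i$, the $v_i$ forming an affine basis with $a^{\star}$) is not a proof and does not obviously close; the affine-independence requirement is in fact unnecessary and only complicates the search. The clean way to finish is different: let $P=\mathrm{conv}(A')$, which is $d$-dimensional by the choice of $a^{\star}$. Since $a^{\star}\notin P$, some facet inequality of $P$ is violated at $a^{\star}$: there is a facet $F$ with functional $\ell_F$ satisfying $\ell_F\le c$ on $P$, $\ell_F=c$ on $F$, and $\ell_F(a^{\star})>c$. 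Being a $(d-1)$-dimensional polytope, $F$ has at least $d$ vertices $w_1,\dots,w_d$, all in $A'$, and the single functional $\ell_F$ certifies all of them simultaneously: $\ell_F(a^{\star}+w_i)=\ell_F(a^{\star})+c>2c\ge\ell_F(y)+\ell_F(z)$ for all $y,z\in A'$, so $a^{\star}+w_1,\dots,a^{\star}+w_d$ together with $2a^{\star}$ are $d+1$ new sums. Note also that your existence claim for $a^{\star}$ (a hull vertex whose deletion preserves the affine dimension) is true but not automatic: one has to argue that if every vertex of $\mathrm{conv}(A)$ were ``critical'' then the critical points would each lie outside the affine span of the remaining points, forcing $A$ to be affinely independent and $|A|=d+1$. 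With these repairs the induction goes through; as submitted, the proposal is a plan whose central step is missing.
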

 
Clearly, the condition $|A+A| \leq K|A|$ can be replaced by $|\{(a_1 + a_2)/2 \,|\, a_1,a_2 \in A\}| \leq K|A|$, where we replace sums with mid-points. Surprisingly enough, the original proof of this lemma works also when we replace mid points with {\em any point on the line segment connecting the two points}. More formally, for two sets  $A,B \subset \R^d$ and any function $f :A \times B \mapsto \R^d$ we can define  $A \fplus B = \{ f(a,b) \,|\, a \in A,\, b \in B \}$. Then, as long as $f(a,b)$ is on the line segment connecting $a,b$ (and is different from $a,b$) we get the same conclusion as in Lemma~\ref{lem-freiman}, assuming  $|A \fplus B| \leq K|A|$. 

Intuitively, our results for $\delta$-$\SG$ configurations are of a similar flavor since the assumption of Freiman's Lemma (in its generalized form just stated) implies the existence of many pairs of points on special lines. We are able to use our techniques to derive the following theorem (whose proof appears in Section~\ref{sec:freiman}). 
 
\begin{thm}\label{thm-freiman-new}
Let $A$ be a finite subset of $\C^d$ and let $f : A \times A \mapsto \C^d$ be any function such that for all $a_1 \neq a_2 \in A$ we have $f(a_1,a_2) = \alpha a_1 + (1-\alpha) a_2$ for some $\alpha = \alpha(a_1,a_2) : A \times A \mapsto \in \C \setminus \{0,1\}$ (i.e., $f(a_1,a_2)$ is on the line passing through $a_1,a_2$ minus the two points $a_1,a_2$). Suppose that $$ |A \fplus A| \leq K\cdot A.$$ Then $\dim(A) \leq  O(K^{2})$	
\end{thm}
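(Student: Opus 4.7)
The plan is to translate the Freiman-type assumption $|A \fplus A| \leq K\,|A|$ into the existence of many collinear triples in $A$, assemble these into a matrix of affine dependences, and apply the improved rank bound of Theorem~\ref{thm:improvedrankbound}.

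Setting $n=|A|$, the $n(n-1)$ ordered pairs of distinct points of $A$ are mapped by $f$ into at most $Kn$ values, so Cauchy--Schwarz on the preimage sizes $N_v:=|f^{-1}(v)|$ gives $\sum_v N_v^2\geq n(n-1)^2/K$, and a second Cauchy--Schwarz, decomposing $N_v=\sum_a k_{a,v}$ by the first coordinate, yields $\sum_a\sum_v k_{a,v}^2\geq(n-1)^2/K$. Subtracting the diagonal contributions, this amounts to $\Omega(n^2/K)$ ``star collisions''---ordered triples $(a,b_1,b_2)\in A^3$ with $b_1\neq b_2$ and $f(a,b_1)=f(a,b_2)$---provided $K\leq n/2$ (otherwise $\dim(A)\leq n-1=O(K)=O(K^2)$ trivially). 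For any such triple, the common image $v=f(a,b_1)=f(a,b_2)$ lies on both the line through $a,b_1$ and the line through $a,b_2$, and since $v\neq a$ (because $\alpha\neq 1$) these two lines must coincide, forcing $a,b_1,b_2$ to be collinear. Equating the two expressions for $v$ yields the affine dependence
\[
(\alpha_1-\alpha_2)\,a+(1-\alpha_1)\,b_1-(1-\alpha_2)\,b_2=0,
\]
whose three coefficients are all non-zero (if $\alpha_1=\alpha_2$ then $(1-\alpha_1)(b_1-b_2)=0$ would force $b_1=b_2$).

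Assemble these dependences into a matrix $M$ whose rows are the coefficient vectors above and whose columns are indexed by the points of $A$; each row has support exactly three. Since every row is a non-trivial affine dependence of the points of $A$, one has the elementary inequality $\affinedim(A)\leq n-1-\rank(M)$, so it suffices to prove $\rank(M)\geq n-O(K^2)$. With $m=\Omega(n^2/K)$ rows, row-support $q=3$, and anticipated design-matrix parameters $k=\Omega(n/K)$ (minimum column support, which matches the average $3m/n$) and $t=O(K)$ (maximum column-pair intersection), Theorem~\ref{thm:improvedrankbound} would yield $\mathrm{corank}(M)\leq m\cdot t\cdot q(q-1)/k^2=O(K^2)$, which is exactly the desired bound on $\affinedim(A)$ and hence on $\dim(A)$.

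The main obstacle is verifying the design-matrix parameters: Theorem~\ref{thm:improvedrankbound} requires uniform, worst-case bounds on $k$ and $t$, while the combinatorial estimates from the Freiman condition are a priori only of average type. Points of $A$ that participate in few star collisions give low-support columns, and certain pairs of points on long special lines can be jointly involved in many collisions, inflating $t$. The technical heart of the argument is to handle these irregularities: one natural approach is to prune outlier points and argue inductively on $|A|$ that the rank bound on the resulting submatrix still controls $\affinedim(A)$; an alternative is to replace Theorem~\ref{thm:improvedrankbound} by an averaged variant in the spirit of Theorem~\ref{thm:averageq}, so that only global (summed) quantities rather than worst-case $k$ and $t$ enter the rank estimate. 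Either way, the combination of the collision count from the Freiman condition with an appropriately robust design-matrix rank bound yields $\dim(A)\leq O(K^2)$.
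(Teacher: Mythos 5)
Your plan hinges on the claim that the hypothesis forces $\Omega(n^2/K)$ ``star collisions'', i.e.\ collinear triples inside $A$, and this step is where the argument breaks. The arithmetic does not support it: with $N_v=\sum_a k_{a,v}$, the two Cauchy--Schwarz applications give $\sum_{a,v}k_{a,v}^2\geq (n-1)^2/K$, while the number of triples $(a,b_1,b_2)$ with $b_1\neq b_2$ and $f(a,b_1)=f(a,b_2)$ is $\sum_{a,v}k_{a,v}^2-\sum_{a,v}k_{a,v}=\sum_{a,v}k_{a,v}^2-n(n-1)$, so your lower bound is $(n-1)\bigl(\tfrac{n-1}{K}-n\bigr)\leq 0$ for every $K\geq 1$ --- not $\Omega(n^2/K)$ for $K\leq n/2$. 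The second Cauchy--Schwarz loses a factor $n$ because almost all collisions guaranteed by a small image are between pairs with all four points distinct, and such collisions force no collinearity within $A$. Nor is this a repairable slack in the counting: the hypothesis genuinely does not force a single special line inside $A$. For example, take $A=\{(t,t^2): t=1,\dots,n\}$ on a parabola; the chord through $t_1,t_2$ has slope $t_1+t_2$, so the $\binom{n}{2}$ chords fall into $2n-3$ parallel classes, and a projective transformation moving the line at infinity turns these into $2n-3$ pencils of concurrent chords with affine apexes off the conic. Defining $f(a_1,a_2)$ to be the apex of the pencil containing the chord $a_1a_2$ is admissible ($\alpha\neq 0,1$) and gives $|A\fplus A|\leq 2n$, yet no three points of $A$ are collinear, so your matrix $M$ is empty and the method outputs nothing. (In this planar instance the conclusion is trivial, but it refutes the implication your whole construction rests on, so neither pruning outliers nor an averaged rank bound in the spirit of Theorem~\ref{thm:averageq} can rescue a matrix supported on triples of $A$ alone.)

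The collinear structure created by the hypothesis lives in $A\cup f(A\times A)$, not in $A$, and the paper's proof exploits exactly this. It takes $B_1$ to be the image points receiving at least $|A|^2/(10|B|)$ pairs and augments the configuration to $B_2=B_1\cup A$, of size $O(K|A|)$; every pair mapping to a point of $B_1$ then lies on a special line of $B_2$, and there are at least $|A|^2/2$ such pairs. Applying the average-case Sylvester--Gallai bound (Corollary~\ref{cor:SGav2}, itself proved via the design-matrix rank bound) to $B_2$ yields a subset of dimension $O(K^2)$ and size $\Omega(|A|/K)$, and the heavy-point case analysis of Claim~\ref{cla-freiman1} converts this into $A'\subset A$ with $|A'|=\Omega(|A|/K)$ and $\dim(A')=O(K^2)$. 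A separate pigeonhole step then controls all of $A$: for a minimal $T\subset A\setminus A'$ with $A\subset\textsf{span}(A'\cup T)$, two distinct points of $T$ cannot have $f$-images at the same $b\in B$ with partners in $A'$ (else both would lie in $\textsf{span}(A'\cup\{b\})$), so $|T|\cdot|A'|\leq|B|$, giving $|T|=O(K^2)$ and $\dim(A)\leq\dim(A')+|T|=O(K^2)$. If you want to keep your design-matrix formulation, the matrix must have columns indexed by the augmented set $B_2$ rather than by $A$; that is, in effect, what the reduction through Corollary~\ref{cor:SGav2} accomplishes, and some analogue of the final $|T|\cdot|A'|\leq|B|$ step is still needed to pass from a large low-dimensional subset to all of $A$.
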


This theorem relaxes the conditions of Freiman's lemma by allowing (a) points in complex space and (b) the value of $f(a_1,a_2)$ to be outside the convex hull of $a_1,a_2$. On the other hand, we get a worse bound of $O(K^2)$ instead of $O(K)$. We do not know if this quadratic loss is needed or not.

\subsection{Organization} 
In Section~\ref{sec:prelim} we introduce some preliminaries related to the technique of matrix scaling. In Section~\ref{sec:rb} we prove Theorem~\ref{thm:improvedrankbound} and Theorem~\ref{thm:averageq}. The rank bound for square design matrices and the application for monotone linear circuits is given in Section~\ref{sec:sqrb}. In Section~\ref{sec:apps} we prove our main application, Theorem~\ref{thm:delta-sg}. In Section~\ref{sec:hdsg} we prove the high dimensional variant. In Section~\ref{sec:freiman} we prove Theorem~\ref{thm-freiman-new}. 

%%%%%%%%%%%%%%%%%%%%%%%%%%%%%%%%%%%%%%%%%%%%%%%%%%%%%%%%%%%%%%%%%%%%%%%%%%%%%%%%%%%%%%%%%%%%%%%%%%%%%%%%%%%%%%%%%%%%%%%%%%%
\section{Preliminaries -- Matrix Scaling}\label{sec:prelim}

One of the most important ingredients in the proof of the rank bound for design matrices is the notion of {\it matrix scaling}. Informally, the matrix scaling theorem states that if a matrix does not have any large zero sub-matrices, then one can multiply the rows and columns of the matrix by non-zero scalars so that all the row sums are equal and all the column sums are equal (assuming the entries are non-negative). 

The technique of matrix scaling originated in a paper of Sinkhorn \cite{Sinkhorn} and has been
widely studied since then (see \cite{LSW98} for more background). It was used in \cite{BDWY11}  for the first time to study design matrices, and we build upon their work and extend it.

We first set up some notation. For a complex matrix $X$, we let $X^\ast$ denote the matrix $X$ conjugated and transposed. Also we let $X_{ij}$ denote the $(i,j)$ entry of $X$. 
For two complex vectors $u, v \in \C^m$, we denote their inner product by $\ip{u}{v} = \sum_{i=1}^m u_i\cdot\overline v_j$ and let $\|u\| = \sqrt{\ip{u}{u}}$ denote the $\ell_2$ norm of the complex vector $u$.

\begin{define}\label{def-scaling}[Matrix scaling]
Let $A$ be an $m \times n$ complex matrix. Let $\rho \in \C^{m}, \gamma \in \C^n$ be two complex vectors
with all entries non-zero.
We denote by $$ \SC(A,\rho,\gamma)$$ the matrix obtained from $A$ by multiplying the $(i,j)$'th element of $A$ by $\rho_i \cdot \gamma_j$. We say that two matrices $A,B$ of the same dimensions are a scaling of each other if there exist non-zero vectors $\rho,\gamma$ such that $B = \SC(A,\rho,\gamma)$.
It is easy to check that this is an equivalence relation. We refer to the elements of the vector $\rho$ as the {\em row scaling coefficients} and to the elements of $\gamma$ as the {\em column scaling coefficients}. Notice that two matrices which are a scaling of each other have the same rank and the same pattern of zero and non-zero entries.
\end{define}

Below we define a property of matrices that gives sufficient conditions for finding a scaling of a matrix which has certain row and column sums.

\begin{define}[Property-$S$]
Let $A$ be an $m \times n$ matrix over some field. We say that $A$ satisfies {\em Property-$S$} if for every zero sub-matrix of $A$ of size $a \times b$ it holds that
\begin{equation}\label{eq-propS}
 \frac{a}{m} + \frac{b}{n} \leq 1.
\end{equation}
\end{define}

For example, a square matrix has Property-S if is has a non-zero generalized diagonal. Also, notice that this property is maintained under concatenation (say, putting two matrices with the same number of columns one under the other). The following theorem is the main tool we will use. Its proof uses ideas from convex optimization but we will only need to use it as a black box.

\begin{thm}[Matrix scaling theorem, Theorem 3 in \cite{RS89} ]\label{thm-scaling}
Let $A$ be an $m \times n$ real matrix with non-negative entries which satisfies Property-$S$. Then, for every $\eps >0$, there
exists a scaling $A'$ of $A$ such that the sum of each row of $A'$ is at most  $1+\eps$ and the sum of each column of $A'$ is at
least $m/n - \eps$. Moreover, the scaling coefficients used to obtain $A'$ are all positive real numbers.
\end{thm}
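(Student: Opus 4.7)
The plan is to recast matrix scaling as an unconstrained convex minimization, exhibit the desired scaling as a near-critical point of the objective, and then use Property-$S$ to certify that the objective is bounded below.

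I will define $F:\R^m\times\R^n\to\R$ by
$$F(x,y) \;=\; \sum_{i,j} A_{ij}\, e^{x_i+y_j} \;-\; \sum_i x_i \;-\; \frac{m}{n}\sum_j y_j.$$
This $F$ is smooth and convex, with $\partial F/\partial x_i = r_i(x,y) - 1$ and $\partial F/\partial y_j = c_j(x,y) - m/n$, where $r_i$ and $c_j$ denote the row and column sums of the scaled matrix with entries $A_{ij}e^{x_i+y_j}$. Setting $\rho_i = e^{x_i}$ and $\gamma_j = e^{y_j}$ (automatically positive reals), any point at which every partial derivative has magnitude at most $\eps$ produces a scaling whose row sums lie in $(1-\eps,1+\eps)$ and whose column sums lie in $(m/n-\eps,m/n+\eps)$, which is stronger than the theorem needs. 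A standard convex-analysis fact then reduces everything to showing that $F$ is bounded below on $\R^m\times\R^n$: if a smooth convex function on $\R^N$ is bounded below, then for every $\eps>0$ there is a point at which $\|\nabla F\|_\infty \leq \eps$, for otherwise a gradient-descent trajectory with small step size would decrease $F$ indefinitely.

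To show $F$ is bounded below, I will argue by contradiction. If $\inf F = -\infty$, then convexity forces a recession direction, i.e., a nonzero $(u,v)\in\R^m\times\R^n$ with $F(x_0+tu,y_0+tv)\to -\infty$ as $t\to\infty$. Inspecting $F$, such $(u,v)$ must satisfy (i) $u_i+v_j\leq 0$ whenever $A_{ij}>0$ (else some exponential term explodes), and (ii) $\sum_i u_i + (m/n)\sum_j v_j > 0$ (so the linear part drives $F$ downward). I will contradict (ii) by a layer-cake analysis: for each $\alpha>0$ put $S_\alpha = \{i:u_i>\alpha\}$, $S'_\alpha = \{i:u_i<-\alpha\}$, $T_\alpha = \{j:v_j>\alpha\}$, $T'_\alpha = \{j:v_j<-\alpha\}$. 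Condition (i) makes both $S_\alpha\times\{j:v_j>-\alpha\}$ and $\{i:u_i>-\alpha\}\times T_\alpha$ zero submatrices of $A$, and Property-$S$ applied to each yields $|T'_\alpha|\geq (n/m)|S_\alpha|$ and $|T_\alpha|\leq (n/m)|S'_\alpha|$. Feeding these into the layer-cake identity $\sum_i u_i = \int_0^\infty (|S_\alpha|-|S'_\alpha|)\,d\alpha$ (and its analogue for $v$) gives $\sum_i u_i + (m/n)\sum_j v_j \leq 0$, contradicting (ii).

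The conceptual heart of the argument is this layer-cake step: choosing the correct per-$\alpha$ zero submatrix (one for the positive levels of $u$, another for the positive levels of $v$) so that Property-$S$ ties the resulting quantitative bounds together into a single scalar inequality. Everything else (convexity of $F$, the reduction to boundedness, and the extraction of an approximate minimizer via gradient descent) is routine convex analysis.
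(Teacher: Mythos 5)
The paper itself does not prove this theorem --- it is imported as a black box from \cite{RS89} --- so the comparison here is with the standard convex-optimization argument your sketch is reconstructing. Most of your proposal is sound: the potential $F(x,y)=\sum_{i,j}A_{ij}e^{x_i+y_j}-\sum_i x_i-\frac{m}{n}\sum_j y_j$ is the right object, an approximate stationary point does yield the desired scaling with positive coefficients, and your layer-cake computation is correct: both $S_\alpha\times\{j:v_j>-\alpha\}$ and $\{i:u_i>-\alpha\}\times T_\alpha$ are indeed zero submatrices, Property-$S$ gives $|T'_\alpha|\geq \frac{n}{m}|S_\alpha|$ and $|T_\alpha|\leq\frac{n}{m}|S'_\alpha|$ for almost every $\alpha$, and integrating kills $\sum_i u_i+\frac{m}{n}\sum_j v_j$. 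The genuine gap is the sentence ``if $\inf F=-\infty$, then convexity forces a recession direction along which $F\to-\infty$.'' That is \emph{not} a general fact about finite convex functions: already in $\R^2$ there are smooth convex functions that are unbounded below yet bounded below on every ray, their infimum being approached only along curved (e.g.\ parabolic) paths; one can build such an example as the conjugate of a closed convex $g$ whose domain is an open half-plane together with a closed ray on its boundary avoiding the origin. So your contradiction only rules out \emph{linear} escape directions, which by itself does not prove that $F$ is bounded below. (A smaller quibble: fixed-step gradient descent needs a Lipschitz gradient, which $F$ lacks globally; use the gradient flow or Ekeland's variational principle to extract points with $\|\nabla F\|\leq\epsilon$.)

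The gap is repairable precisely because $F$ is not a generic convex function but a positive combination of exponentials of linear forms minus a linear form. By Farkas' lemma, exactly one of the following holds: (a) there is $(u,v)$ with $u_i+v_j\leq 0$ whenever $A_{ij}>0$ and $\sum_i u_i+\frac{m}{n}\sum_j v_j>0$; or (b) there are weights $\lambda_{ij}\geq 0$, supported on the nonzero entries of $A$, with $\sum_j\lambda_{ij}=1$ for every $i$ and $\sum_i\lambda_{ij}=m/n$ for every $j$. Your Property-$S$/layer-cake argument is exactly a proof that (a) is impossible, so (b) holds; then $\sum_i x_i+\frac{m}{n}\sum_j y_j=\sum_{i,j}\lambda_{ij}(x_i+y_j)$ and
$F(x,y)=\sum_{(i,j):A_{ij}>0}\bigl(A_{ij}e^{x_i+y_j}-\lambda_{ij}(x_i+y_j)\bigr)\geq\sum_{(i,j):A_{ij}>0}\min_{s\in\R}\bigl(A_{ij}e^{s}-\lambda_{ij}s\bigr)>-\infty,$
so $F$ is bounded below outright, no recession analysis needed. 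With that replacement (and the gradient-flow fix above), your argument becomes a complete proof, and it is in the same spirit as the known proofs of the scaling theorem, where Property-$S$ enters exactly through the existence of such a reweighting $\lambda$ supported on the nonzero pattern of $A$.
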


In our proof will use the following easy corollary of the above theorem that appeared in~\cite{BDWY11}. This corollary is obtained by applying the matrix scaling theorem to the matrix obtained by squaring all entries of the original matrix.

\begin{cor}[Corollary  from \cite{BDWY11}]\label{cor:scaling}
Let $A = (a_{ij})$ be an $m \times n$ complex matrix which satisfies Property-$S$. Then, for every $\eps > 0$, there exists a scaling $A'$ of $A$ such that for every $i \in [m]$ $$ \sum_{j \in [n]} |a_{ij}|^2 \leq 1+\eps$$ and for every $j \in [n]$ $$ \sum_{i \in [m]} |a_{ij}|^2 \geq m/n - \eps.$$
\end{cor}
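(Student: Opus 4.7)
The plan is to reduce the corollary to Theorem~\ref{thm-scaling} by applying it to the entry-wise squared-modulus matrix. Concretely, I would define the real non-negative matrix $B = (b_{ij})$ with $b_{ij} = |a_{ij}|^2$. The first observation is that $B$ has exactly the same support pattern as $A$, so every zero sub-matrix of $A$ is a zero sub-matrix of $B$ and vice versa; in particular $B$ satisfies Property-$S$.

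Next I would invoke Theorem~\ref{thm-scaling} on $B$ with the given $\eps > 0$. This yields positive real vectors $r \in \R^m_{>0}$ and $c \in \R^n_{>0}$ and a scaled matrix $B' = \SC(B,r,c)$ whose rows sum to at most $1+\eps$ and whose columns sum to at least $m/n - \eps$. The crucial feature to exploit, which is explicitly part of the statement of Theorem~\ref{thm-scaling}, is that the scaling coefficients $r_i, c_j$ may be taken to be \emph{positive real} numbers, so that $\sqrt{r_i}$ and $\sqrt{c_j}$ make sense.

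Now I would lift the scaling back to $A$ by taking square roots coordinatewise: set $\rho_i = \sqrt{r_i}$ and $\gamma_j = \sqrt{c_j}$, and let $A' = \SC(A,\rho,\gamma)$. Then $(A')_{ij} = \rho_i \gamma_j a_{ij}$, hence
\[
|(A')_{ij}|^2 \;=\; \rho_i^2\,\gamma_j^2\,|a_{ij}|^2 \;=\; r_i\,c_j\,b_{ij} \;=\; (B')_{ij}.
\]
Therefore the row sum $\sum_j |(A')_{ij}|^2$ equals the row sum of $B'$, which is at most $1+\eps$, and the column sum $\sum_i |(A')_{ij}|^2$ equals the column sum of $B'$, which is at least $m/n - \eps$. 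This gives both bounds claimed in the corollary.

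The only thing one has to be careful about is that the scaling coefficients produced by Theorem~\ref{thm-scaling} are genuinely positive reals (not merely nonzero), otherwise the square-root step would fail or introduce spurious complex phases; this is exactly what the moreover clause of Theorem~\ref{thm-scaling} guarantees. Everything else is a direct verification, so there is no substantive obstacle beyond keeping track of this positivity.
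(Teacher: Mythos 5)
Your proof is correct and is exactly the route the paper intends: it notes that the corollary "is obtained by applying the matrix scaling theorem to the matrix obtained by squaring all entries of the original matrix," which is precisely your construction of $B=(|a_{ij}|^2)$ followed by taking square roots of the positive real scaling coefficients. Your remark about positivity of the coefficients (from the moreover clause of Theorem~\ref{thm-scaling}) is the right detail to flag, and nothing further is needed.
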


%%%%%%%%%%%%%%%%%%%%%%%%%%%%%%%%%%%%%%%%%%%%%%%%%%%%%%%%%%%%%%%%%%%%%%%%%%%%%%%%%%%%%%%%%%%%%%%%%%%%%%%%%%%%%%%%%%%%%%%%%%%

\section{Proof of the rank bound}\label{sec:rb}
In this section we will present the proofs for Theorem~\ref{thm:improvedrankbound} and Theorem~\ref{thm:averageq}. The proof follows the same general outline as the one appearing in \cite{BDWY11}:

% The high level plan is the following. Starting from the matrix $A$, which is a design matrix, we will construct a matrix $M$ where the magnitude of each entry on the main diagonal is large and the sum of squares of all off-diagonal entries is small, and such that the rank of $M$ is no larger than the rank of $A$. We will then show how to lower bound the rank of $M$, a diagonally dominant matrix, using a variant of an argument by Alon~\cite{Alo09}, and this will give us a lower bound on the rank of $A$. A similar plan was also used to obtain the rank bounds for design matrices in the work by Barak et al~\cite{BDWY11}. The main difference in this proof is that in the analysis presented here we obtain a much tighter bound on the magnitudes of the entries of $M$, which lets us obtain nearly optimal bounds on the rank of $A$.

\paragraph{\bf Step 1 -- Scaling:}
Given the design matrix $A$, we construct a scaling $A'$ of $A$ where every column has large $\ell_2$ norm and every row has small $\ell_2$ norm. 
Since $A$ need not satisfy Property-$S$, we are not be able to apply Corollary~\ref{cor:scaling} directly. Instead we first find a matrix $B$ whose rows are chosen from 
the rows of A, with repetitions, such that no row is chosen too many times. 
If each row of $A$ occurs a maximum of $c$ times in $B$, then we call $B$ a $c$-cover of $A$.  We can then apply Corollary~\ref{cor:scaling} to get a scaling of $B$ with equal row norms and equal column norms, and then use the scaling of $B$ to derive a scaling of $A$ with the desired properties.  In~\cite{BDWY11} $B$ is taken to be a $q$-cover of $A$.

\paragraph{\bf Step 2 -- Obtaining a diagonal dominant matrix:} 
Given the scaling $A'$ of $A$  we consider the matrix $M = A'^\ast A'$. Clearly all the diagonal entries of $M$, which correspond to the squared $\ell_2$ norms of the columns of $A'$, are large. We use the design properties of $A$, as well as the properties of the scaling to show that the sum of square of the off-diagonal entries of $M$ is small. Matrices such as $M$ are called `diagonal dominant' and bounding their rank can be done in various ways (see e.g., \cite{Alo09}).  In this step, our calculation gives a tighter analysis of the bounds of the entries of $M$ and we are hence able to obtain the stronger rank bounds compared to \cite{BDWY11}.

\subsection{Covering lemmas}

The following  lemma is implicit in ~\cite{BDWY11} and shows how one can find a cover of a matrix $A$ that satisfies Property-$S$. Recall that a matrix $B$ is a $c$-cover of $A$ if each row of $B$ is a row of $A$ and each row of $A$ appears at most $c$ times in $B$.

\begin{lem}\label{lem:qcover}
Let $A$ by an $m \times n$ matrix over $\C$ that is a $(q,k,t)$ design matrix. Then there exists an $nk \times n$ matrix $B$ that is a $q$-cover of $A$, and such that $B$ satisfies Property-$S$.
\end{lem}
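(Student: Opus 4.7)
The plan is to construct $B$ directly by a column-by-column selection procedure. For each column $j \in [n]$, I would choose exactly $k$ distinct rows from the support $\supp(C_j)$ (which has size at least $k$ by the design condition), and stack all $nk$ chosen rows to form $B$. By construction $B$ has exactly $nk$ rows, each being a row of $A$.

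Next I would verify that this is a $q$-cover. A given row $R_i$ of $A$ can be selected during the processing of column $j$ only if $(R_i)_j \neq 0$, i.e., only if $j \in \supp(R_i)$. Since the row selections done for a fixed column $j$ are distinct, $R_i$ is picked at most once per column $j$ in its support, and thus at most $|\supp(R_i)| \leq q$ times in total across the whole construction.

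The key step is then to check Property-$S$ for $B$ (with the appropriate dimensions, so the condition reads $a/(nk) + b/n \leq 1$). I would argue contrapositively: suppose $B$ has a zero submatrix on a set $S \subset [n]$ of $b$ columns and a set $T$ of $a$ rows. Every row in $T$ has support disjoint from $S$. But by construction, any row selected while processing column $j$ has a nonzero entry in coordinate $j$; hence no row in $T$ was produced by processing a column in $S$. So all $a$ rows come from processing the $n-b$ columns outside $S$, and since only $k$ rows are produced per column, $a \leq k(n-b)$, which rearranges to exactly $a/(nk) + b/n \leq 1$.

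I do not expect a real obstacle here: the $t$-intersection hypothesis plays no role in this lemma (it is saved for the scaling/diagonal-dominance steps later), and both the $q$-cover bound and Property-$S$ follow from tallying the contribution of the per-column selections. The only mildly delicate point is making the bookkeeping above unambiguous, namely that each selection is explicitly indexed by the column it was made for, so that the same row of $A$ selected from different columns is correctly counted as producing distinct rows of $B$ that each carry the corresponding nonzero coordinate.
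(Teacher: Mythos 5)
Your construction is exactly the paper's: for each column pick $k$ rows supported on that column and concatenate the resulting $k\times n$ blocks, which the paper states as a proof sketch and you supplement with the (correct) verifications of the $q$-cover bound and of Property-$S$ via the counting $a \leq k(n-b)$. No gaps; this is essentially the same proof.
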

\begin{proof}[Proof sketch]
$B$ is constructed as follows: for each $i \in [n]$, we let $B_i$ be a $k \times n$ submatrix of $A$ which has no zeros in the $i$'th column. Let $B$ be the $nk \times n$ matrix which is composed of the concatenation all matrices $B_i, i \in [n]$. 
\end{proof}

We now prove a variant of Lemma~\ref{lem:qcover} where every row of $A$ appears at most $6nt/k$ times in $B$. In some settings $6nt/k$ might be smaller than $q$ and then this variant might give a potentially stronger rank bound (as stated in Theorem~\ref{thm:averageq}). Observe that this rank bound is independent of the parameter $q$. 

\begin{lem}\label{lem:ntkcover}
Let $A$ by an $m \times n$ matrix over $\C$ that is a $(q,k,t)$ design. Suppose $k \leq nt$, then there exists an $nk \times n$ matrix $B$ that is a $6nt/k$-cover of $A$, and that satisfies Property-$S$.
\end{lem}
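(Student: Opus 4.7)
The plan is to find, for each column $i \in [n]$, a subset $T_i \subseteq \supp(C_i)$ of size exactly $k$ such that no row of $A$ lies in more than $c := 6nt/k$ of the sets $T_1, \ldots, T_n$. Given such a choice, the matrix $B$ is obtained by concatenating the $n$ blocks consisting of the rows of $A$ indexed by $T_1, \ldots, T_n$; this is an $nk \times n$ matrix and a $c$-cover of $A$ by construction. Property-$S$ then follows by the same argument as in Lemma~\ref{lem:qcover}: any zero submatrix of $B$ with column set $J \subseteq [n]$ of size $b$ can only use rows coming from blocks $T_i$ with $i \notin J$ (since rows from block $T_i$ are nonzero in column $i$ of $A$), so the number of rows in the zero submatrix is $a \leq (n-b)k$, and hence $a/(nk) + b/n \leq 1$.

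To produce the sets $T_i$, I will set up a bipartite $b$-matching problem on the graph $G = ([n] \cup [m], E)$ with $E = \{(i,r) : r \in \supp(C_i)\}$, seeking a sub-multigraph in which every column-vertex has degree exactly $k$ and every row-vertex has degree at most $c$. By the classical Hall-type theorem for bipartite $b$-matchings (provable via integral max-flow/min-cut), such a sub-multigraph exists if and only if
\[
k\,|U_0| \;\leq\; c\,|N(U_0)| \qquad \text{for every } U_0 \subseteq [n],
\]
where $N(U_0) = \bigcup_{i \in U_0} \supp(C_i)$.

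The nontrivial step is to verify this Hall condition from the design property together with the hypothesis $k \leq nt$. By the Bonferroni inclusion--exclusion bound applied to the sets $\{\supp(C_i)\}_{i \in U_0}$, any $U_0$ of size $s$ satisfies
\[
|N(U_0)| \;\geq\; sk - \binom{s}{2} t.
\]
For $s \leq k/t$ the right-hand side is at least $sk/2$, so the Hall condition reduces to $c \geq 2$, i.e.\ $k \leq 3nt$, which follows from $k \leq nt$. For $s > k/t$ I will apply the previous case to a subset $U_1 \subseteq U_0$ of size $\lfloor k/t \rfloor$, obtaining $|N(U_0)| \geq |N(U_1)| = \Omega(k^2/t)$, which beats the required $k|U_0|/c = k^2|U_0|/(6nt) \leq k^2/(6t)$ by a constant factor (the degenerate case $k < 2t$ is handled by the trivial bound $|N(U_0)| \geq k$). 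The main obstacle is arranging the two regimes so the constants actually fit inside the promised cover multiplicity $6nt/k$; the design property is used precisely to make the Bonferroni estimate yield a neighborhood growing linearly in $s$ for all $s$ up to $\Theta(k/t)$, which is exactly what converts the average row load $nk/m = O(nt/k)$ into a pointwise upper bound on the cover.
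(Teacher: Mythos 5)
Your proof is correct, but it takes a genuinely different route from the paper's. The paper argues by explicit construction: it partitions the columns into at most $\lceil 2nt/k\rceil$ groups of size at most $k/2t$, observes that within a group each column has at least $k/2$ ``private'' support rows (rows where no other column of the group is nonzero, by the pairwise intersection bound $t$), and forms each block from two copies of $k/2$ private rows per column; each row of $A$ is then used at most twice per group, hence at most $2\lceil 2nt/k\rceil\le 6nt/k$ times, and Property-$S$ follows by decomposing $B$ into $k$ square matrices with nonzero generalized diagonals. You instead prove existence nonconstructively via a degree-constrained bipartite $b$-matching (column degrees exactly $k$, row degrees at most $c=6nt/k$), reduce to the neighborhood condition $k|U_0|\le c\,|N(U_0)|$ by max-flow/min-cut, and verify it from the design property through the Bonferroni bound $|N(U_0)|\ge sk-\binom{s}{2}t$; I checked your constants and they do fit (for $s\le k/t$ one only needs $k\le 3nt$, and for larger $s$ the bound $|N(U_0)|\ge \lfloor k/t\rfloor k/2\ge k^2/(4t)$ beats the requirement $k^2s/(6nt)\le k^2/(6t)$ once $k\ge 2t$, with small $k$ covered by $|N(U_0)|\ge k$). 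Your Property-$S$ argument (every row of block $i$ is nonzero in column $i$, so a zero $a\times b$ submatrix has $a\le (n-b)k$) is also valid, and is if anything cleaner than the paper's. One wrinkle to fix in the write-up: the Hall-type criterion you invoke characterizes sub-\emph{multi}graphs, so the $T_i$ it produces are multisets rather than genuine $k$-element subsets of $\supp(C_i)$ as announced in your first paragraph; this is harmless here, since repeated rows within a block affect neither the dimensions, nor the cover bound (which counts total multiplicity, i.e.\ the row degree), nor Property-$S$ -- indeed the paper's own construction duplicates rows -- but if you insisted on unit edge capacities the neighborhood condition alone would not be the right criterion. Overall, the paper's partition argument is elementary and explicit, while yours is more flexible: any neighborhood lower bound linear in $|U_0|$ up to scale $k/t$ yields a cover, and the min-cut view makes transparent why the achievable multiplicity is governed by $nk/|N(\cdot)|$.
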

\begin{proof}
We split the set of $n$ columns of $A$ into $\ell$ sets, where $\ell \leq \lceil 2nt/k \rceil$, each of size at most $k/2t$. Call these sets $S_1, S_2, \ldots, S_\ell$. For each $S_i$, we will first construct a matrix $B_i$ which is an $(|S_i| \cdot k) \times n$ matrix that will be composed of the rows of $A$, where each row appears at most $2$ times.
The matrix $B$ will be an $nk \times n$ matrix which is composed of all the matrices $B_i$, $1 \leq i \leq \ell$. In other words the set of rows of $B$ is the multi-set obtained by taking all the rows of all the $B_i$. 

The matrix $B_i$ is constructed as follows. For each column in $S_i$, there are at least $k/2$ rows such that none of the other columns in $S_i$ has a nonzero entry in that row. This is because the intersection of the support of any two columns has size at most $t$, and there are at most $k/2t$ columns in $S_i$. Thus the support of each column can intersect the union of the support of all other columns in $S_i$ in at most $k/2$ locations. For each column of $S_i$, pick some $k/2$ rows such that none of the other columns in $S_i$ has support which intersects that row, and add $2$ copies of each of those rows to the matrix $B_i$. Do this for each column in $S_i$. It follows immediately from construction that each row of $A$ appears at most two times in each $B_i$.

Since each row of $A$ appears at most two times in each $B_i$, and $B$ is composed of $\leq  \lceil 2nt/k \rceil$ such matrices $B_i$, thus  each row of $A$ appears at most $2 \lceil 2nt/k \rceil \leq 4nt/k + 2 \leq 6nt/k$ times in $B$ (using the bound $k \leq nt$).

To see that $B$ satisfies property-S, observe that $B$ can be written as the union of $k$ square $n \times n$ matrices each with nonzero entries on the main diagonal. For each of the $n\times n$ matrices, we would take $|S_i|$ rows per set $S_i$, where each row corresponds to one of the columns in $S_i$, such that none of the other columns in $S_i$ have a nonzero entry in that row.
\end{proof}

The next lemma shows the one can use a cover of $A$ to find a good scaling of $A$. This lemma is also implicit in \cite{BDWY11} and we give the proof sketch only for completeness.

\begin{lem}\label{lem:covertoscaling}
Let $A$ by an $m \times n$ matrix over $\C$, and let $B$ be an $nk \times n$ matrix that is a $c$-cover of $A$, and such that $B$ satisfies Property-$S$. Then, for every $\eps>0$, there exists a scaling $A'$ of  $A$ in which each row of $A'$ has $\ell_2$ norm at most $\sqrt{1 + \epsilon}$ and each column of $A'$ has $\ell_2$ norm at least $\sqrt{(k - \epsilon)/c}$.
\end{lem}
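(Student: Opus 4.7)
The plan is to apply Corollary~\ref{cor:scaling} to $B$ (which satisfies Property-$S$ by hypothesis), and then transfer the scaling obtained on $B$ to a scaling of $A$, using the $c$-cover structure. Since $B$ is $nk \times n$, the corollary yields row scaling coefficients $\sigma \in \R_{>0}^{nk}$ and column scaling coefficients $\gamma \in \R_{>0}^{n}$ so that the matrix $B' = \SC(B,\sigma,\gamma)$ satisfies $\sum_{j} |B'_{rj}|^2 \leq 1+\eps$ for every row $r$ and $\sum_{r} |B'_{rj}|^2 \geq k - \eps$ for every column $j$ (using $m/n = nk/n = k$).

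The key observation is that the column scaling coefficients $\gamma$ can be reused verbatim for $A$, while the row scaling coefficients must be consolidated because each row $R_i$ of $A$ may appear as several rows of $B$. For each $i \in [m]$, let $I_i \subseteq [nk]$ be the set of indices of rows of $B$ that are copies of $R_i$; then $|I_i| \leq c$ by the $c$-cover property. I would define the row scaling coefficients for $A$ by
\[
\rho'_i \;=\; \max_{r \in I_i} \sigma_r,
\]
and set $A' = \SC(A,\rho',\gamma)$ (with $\rho'_i$ arbitrary nonzero if $I_i$ is empty).

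For the row bound, observe that for every $i$,
\[
\sum_{j \in [n]} |A'_{ij}|^2 \;=\; (\rho'_i)^2 \sum_{j} |\gamma_j A_{ij}|^2 \;=\; \max_{r \in I_i}\Bigl(\sigma_r^2 \sum_{j} |\gamma_j B_{rj}|^2\Bigr) \;=\; \max_{r \in I_i}\sum_{j} |B'_{rj}|^2 \;\leq\; 1+\eps,
\]
where the second equality uses that $B_{rj} = A_{ij}$ whenever $r \in I_i$. For the column bound, I would group the entries of column $j$ of $B'$ according to which row of $A$ they come from:
\[
k - \eps \;\leq\; \sum_{r \in [nk]} |B'_{rj}|^2 \;=\; \sum_{i \in [m]} |\gamma_j A_{ij}|^2 \sum_{r \in I_i} \sigma_r^2 \;\leq\; \sum_{i \in [m]} |\gamma_j A_{ij}|^2 \cdot c\, (\rho'_i)^2 \;=\; c \sum_{i \in [m]} |A'_{ij}|^2,
\]
using $\sum_{r\in I_i}\sigma_r^2 \leq |I_i| \cdot \max_{r\in I_i}\sigma_r^2 \leq c (\rho'_i)^2$. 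Dividing by $c$ and taking square roots gives the claimed bounds.

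The only subtle step is the choice $\rho'_i = \max_{r\in I_i}\sigma_r$: this is precisely the value that is large enough to retain the column-sum lower bound from $B'$ (up to a factor of $c$) while remaining small enough that every row of $A'$ is dominated by the worst of its copies in $B'$, keeping the row norm within the same $\sqrt{1+\eps}$ ceiling. Any significantly smaller choice (e.g.\ a root-mean-square) would weaken the row bound, and any larger choice would weaken the column bound; the max achieves the right trade-off because upper bounds on individual scaled copies are preserved under pointwise maxima of the scalar coefficient.
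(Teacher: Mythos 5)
Your proposal is essentially the paper's own proof: same use of Corollary~\ref{cor:scaling} on $B$, same reuse of the column coefficients, and the same choice of the maximum of the coefficients of the copies as the row coefficient, with the identical row/column calculations. The only correction needed is for rows of $A$ with $I_i$ empty: an \emph{arbitrary} nonzero $\rho'_i$ could violate the row bound, so (as the paper does) you must choose $\rho'_i$ small enough that the scaled row has $\ell_2$ norm at most $1$; such rows only add nonnegative terms to the column sums, so the column bound is unaffected.
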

\begin{proof}[Proof sketch]
Fix  $\epsilon>0$ and apply Corollary~\ref{cor:scaling} on $B$ to obtain a scaling $B'$ of $B$ such that the $\ell_2$ norm of each column is at least $\sqrt{k-\epsilon}$, and the $\ell_2$ norm of each row is at most $\sqrt{1+\epsilon}$. We now use this scaling $B'$ of $B$ to obtain the scaling $A'$ of $A$.  The scaling of the columns used to get $A'$ is the same as the scaling coefficients for the columns of $B'$. We pick the scaling coefficients of the rows of $A'$ as follows: for each row $R$ that appears in $A$, we look at the occurrences $R_1, R_2, \ldots, R_i$ of the same row in $B$ and look at the scaling coefficients for those rows in $B'$. Say the coefficients are $s_1,s_2, \ldots, s_i$ $(i \leq c)$, then we take $\max\{s_1, \ldots s_i\}$ to be the scaling coefficient of row $R$. If the row $R$ does not appear in $B$ at all, then we pick the scaling coefficient to be such that the final $\ell_2$ norm of the row is $1$. One can easily verify  that $A'$ is a scaling of $A$ with each row of $A'$ having $\ell_2$ norm at most $\sqrt{1 + \epsilon}$ and each column having $\ell_2$ norm at least $\sqrt{(k - \epsilon)/c}$.
\end{proof}

\subsection{Proof of Theorem~\ref{thm:improvedrankbound} and Theorem~\ref{thm:averageq}}

Before proving the theorems we prove two more lemmas. The first lemma is the main new ingredient in our proof  enables us to get a tighter bound on the entries of the diagonal dominant matrix $M$ (see proof outline above).

\begin{lem}\label{lem:sumofsqs}
Let $A$ be an $m \times n$ matrix over $\C$. Suppose that each row of $A$ has $\ell_2$ norm $< \alpha$ and suppose that the supports of every two columns of $A$ intersect in at most $t$ locations. Let $M = A^\ast  A$. Then $$\sum_{i \neq j} |M_{ij}|^2 \leq t m \alpha^4.$$
Moreover if we know that the size of the support of every row in $A$ is at most $q$, then $$\sum_{i \neq j} |M_{ij}|^2 \leq \left(1 - \frac{1}{q}\right)t m \alpha^4.$$
\end{lem}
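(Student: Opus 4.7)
The plan is to unfold $M_{ij} = \sum_k \overline{A_{ki}}\,A_{kj}$ straight from the definition and then apply Cauchy--Schwarz twice, each time powered by one of the two structural hypotheses. First I would observe that the only indices $k$ contributing to $M_{ij}$ lie in $\supp(C_i)\cap\supp(C_j)$, a set of size at most $t$. Combining the triangle inequality with Cauchy--Schwarz against the constant-$1$ vector on this set gives
$$|M_{ij}|^2 \;\leq\; t\sum_{k=1}^{m}|A_{ki}|^2\,|A_{kj}|^2.$$

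Next I would sum over $j\neq i$, interchange the two sums, and invoke the row-norm hypothesis $\sum_j|A_{kj}|^2<\alpha^2$:
$$\sum_{j\neq i}|M_{ij}|^2 \;\leq\; t\sum_k |A_{ki}|^2\sum_{j\neq i}|A_{kj}|^2 \;\leq\; t\alpha^2\sum_k|A_{ki}|^2.$$
Summing this over $i$ and applying the row-norm bound one more time yields $\sum_{i\neq j}|M_{ij}|^2\leq tm\alpha^4$, which is the first claimed inequality.

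For the refined bound under the extra hypothesis that each row of $A$ has at most $q$ nonzero entries, the key is to refuse to discard the subtracted diagonal term. Writing $r_k := \sum_i|A_{ki}|^2 < \alpha^2$, I use $\sum_{j\neq i}|A_{kj}|^2 = r_k - |A_{ki}|^2$ in the calculation above, which after swapping sums tightens to
$$\sum_{i\neq j}|M_{ij}|^2 \;\leq\; t\sum_k\Bigl(r_k^2 - \sum_i |A_{ki}|^4\Bigr).$$
Since row $k$ has at most $q$ nonzero entries, Cauchy--Schwarz on this support gives $r_k^2 = \bigl(\sum_i|A_{ki}|^2\bigr)^2 \leq q\sum_i|A_{ki}|^4$, so the bracket is at most $(1-1/q)r_k^2$. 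Bounding $r_k^2\leq \alpha^2 r_k$ and $\sum_k r_k\leq m\alpha^2$ delivers the stated factor of $(1-1/q)tm\alpha^4$. I do not expect any real obstacle; the whole argument is a careful double application of Cauchy--Schwarz, and the only delicate point is precisely that retaining the $|A_{ki}|^2$ term (rather than discarding it as in \cite{BDWY11}) together with the power-mean estimate $r_k^2\leq q\sum_i|A_{ki}|^4$ is what produces the $(1-1/q)$ saving, which in turn is what upgrades Theorem~\ref{thm-bdwymain} to Theorem~\ref{thm:improvedrankbound}.
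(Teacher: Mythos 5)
Your proof is correct and follows essentially the same route as the paper: the first Cauchy--Schwarz over the size-$\leq t$ intersection of column supports, then the row-norm bound, and for the refined estimate the same trick of keeping the $\sum_i|A_{ki}|^4$ term and bounding it below by $\frac{1}{q}\bigl(\sum_i|A_{ki}|^2\bigr)^2$ via Cauchy--Schwarz on the row support. The only difference is the cosmetic order in which the row-norm bound $\alpha^2$ is applied, so there is nothing to change.
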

\begin{proof}
For $1 \leq i \leq n$, Let $C_i$ denote the $i$th column of $A$.  Then 
\begin{align*}
\sum_{i \neq j} |M_{ij}|^2 &= \sum_{i \neq j} |\langle C_i,C_j \rangle|^2 \\
&= \sum_{i \neq j} \left|\sum_{k=1}^m A_{ki}\overline A_{kj}\right|^2 \\
&\leq \sum_{i \neq j} t \sum_{k=1}^m |A_{ki}|^2|A_{kj}|^2 \\
& \leq t \sum_{k=1}^m \left(\sum_{i = 1}^n |A_{ki}|^2\right)^2 \\
&\leq t m \alpha^4.
\end{align*}

When there are at most $q$ nonzero entries per row, we have
\begin{align*}
 t \sum_{k=1}^m  \sum_{i \neq j}  |A_{ki}|^2|A_{kj}|^2 &=  
 t \sum_{k=1}^m \left(\sum_{i = 1}^n |A_{ki}|^2\right)^2 -  t \sum_{k=1}^m \left(\sum_{i = 1}^n |A_{ki}|^4\right) \\
 &\leq t \sum_{k=1}^m \left(\sum_{i = 1}^n |A_{ki}|^2\right)^2
 - t \sum_{k=1}^m \frac{1}{q} \left(\sum_{i = 1}^n |A_{ki}|^2\right)^2 \\
&= \left(1 - \frac{1}{q} \right) t  \sum_{k=1}^m\left(\sum_{i = 1}^n |A_{ki}|^2\right)^2 \\
&\leq \left(1 - \frac{1}{q} \right) t  m \alpha^4.
\end{align*}
\end{proof}

The second lemma is a variant of a folklore lemma on the rank of diagonal dominant matrices (see, e.g., \cite{Alo09}).
\begin{lem}\label{lem-diagdominant}
Let $M$ be an $n \times n$ Hermitian matrix such that for each $i \in [n]$, $M_{ii} \geq L$, where $L$ is some positive real number. Then,
$$ \rank(M) \geq \frac{n^2L^2}{nL^2 + \sum_{i \neq j}|M_{ij}|^2}. $$
\end{lem}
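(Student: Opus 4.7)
My plan is to reduce to the familiar case where all diagonal entries are equal, via a diagonal conjugation that preserves rank, and then apply the standard trace Cauchy--Schwarz bound for Hermitian matrices.

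First, since $M_{ii} \geq L > 0$ for every $i$, I can form the invertible real diagonal matrix $D$ with $D_{ii} = \sqrt{M_{ii}}$ and let $N = D^{-1} M D^{-1}$. Because $D$ is invertible, $\rank(N) = \rank(M)$, and $N$ is Hermitian with $N_{ii} = 1$ and $N_{ij} = M_{ij}/\sqrt{M_{ii} M_{jj}}$ for $i \neq j$. The point of this normalization is that it gives $N$ a constant diagonal without affecting rank, which is what will let me turn a trace inequality into the desired bound.

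Next I would apply the standard eigenvalue Cauchy--Schwarz. Writing the real eigenvalues of the Hermitian matrix $N$ as $\lambda_1,\dots,\lambda_n$ with exactly $r = \rank(N)$ nonzero values, Cauchy--Schwarz gives $(\mathrm{tr}\,N)^2 = \bigl(\sum_i \lambda_i\bigr)^2 \leq r \sum_i \lambda_i^2 = r \cdot \mathrm{tr}(N^2)$. Since $N$ is Hermitian, $\mathrm{tr}(N^2) = \sum_{i,j}|N_{ij}|^2 = n + \sum_{i\ne j}|N_{ij}|^2$, while $\mathrm{tr}(N) = n$. Hence
\[
\rank(M) \;=\; \rank(N) \;\geq\; \frac{n^2}{n + \sum_{i\ne j}|N_{ij}|^2}.
\]

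Finally I would convert back to $M$. Using $M_{ii}, M_{jj}\geq L$, I have $|N_{ij}|^2 = |M_{ij}|^2/(M_{ii}M_{jj}) \leq |M_{ij}|^2/L^2$, so
\[
\rank(M) \;\geq\; \frac{n^2}{n + \frac{1}{L^2}\sum_{i\ne j}|M_{ij}|^2} \;=\; \frac{n^2 L^2}{nL^2 + \sum_{i\ne j}|M_{ij}|^2},
\]
which is exactly the claimed inequality. The only mildly nontrivial step is noticing that conjugation by $D^{-1}$ preserves rank and delivers a unit diagonal, which is what makes the clean trace-squared Cauchy--Schwarz yield the precise constants demanded by the lemma; the rest is routine.
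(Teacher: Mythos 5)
Your proof is correct and follows essentially the same route as the paper: the paper also normalizes the diagonal by the scaling $M'_{ij} = \frac{L}{\sqrt{M_{ii}M_{jj}}}M_{ij}$ (which is just $L$ times your $D^{-1}MD^{-1}$) and then applies the same trace-squared Cauchy--Schwarz over the nonzero eigenvalues. The only cosmetic difference is that you push the $M_{ii},M_{jj}\geq L$ bound through at the end on $|N_{ij}|^2$, while the paper notes up front that the scaling coefficients are at most $1$; both yield the identical constants.
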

\begin{proof}
First, note that, w.l.o.g, we can assume $M_{ii}=L$ for all $i \in [n]$. If not, we can replace $M$ with a scaling $M'$ of $M$ defined as $M'_{ij} = \frac{L}{\sqrt{M_{ii}M_{jj}}} \cdot M_{ij}$. Since all scaling coefficients are at most $1$  we have $$ \sum_{i \neq j}|M'_{ij}|^2  \leq \sum_{i \neq j}|M_{ij}|^2$$ and both matrices $M$ and $M'$ have the same rank. To bound the rank of $M$ (assuming all diagonal elements are equal to $L$) we  denote its (real) non-zero eigenvalues by $\lambda_1, \lambda_2, \ldots, \lambda_r$, where $r = \rank(M)$. Then
\begin{align*}
n^2L^2 &= \mathsf{tr}(M)^2 = \left(\sum_{i = 1}^r \lambda_i\right)^2 \\
&\leq r \sum_{i = 1}^r \lambda_i^2 = r  \sum_{i,j = 1}^n |M_{ij}|^2 \\
&=r  \left(n  L^2 +  \sum_{i \neq j}|M_{ij}|^2 \right).
\end{align*}
Rearranging, we get the required bound on  $r$.
\end{proof}

\begin{proof}[Proof of Theorem~\ref{thm:improvedrankbound}]

Let $A$ be an $(q,k,t)$ design matrix and fix some $\eps > 0$. Using Lemmas~\ref{lem:qcover} and~\ref{lem:covertoscaling}, we obtain a scaling $A'$ of $A$ where each row of $A'$ has $\ell_2$ norm at most $\sqrt{1 + \epsilon}$ and each column has $\ell_2$ norm at least $\sqrt{(k - \epsilon)/q}$. Let $M = A'^\ast  A'$. Then $M_{ii} \geq (k- \epsilon)/q$ and, by Lemma~\ref{lem:sumofsqs}, $\sum_{i \neq j} |M_{ij}|^2 \leq \left(1 - \frac{1}{q}\right)t  m  (1 + \epsilon)^2.$ Applying Lemma~\ref{lem-diagdominant} to $M$ we get that
$$ \rank(M) \geq \frac{n^2\left(\frac{k-\epsilon}{q}\right)^2}{n  \left(\frac{k-\epsilon}{q}\right)^2 + \left(1 - \frac{1}{q}\right) t m  (1 + \epsilon)^2 }. $$ Taking $\eps$ to zero and simplifying, we get	 $$\rank(M) \geq \frac{n}{1 + \frac{q(q-1)mt}{nk^2}} \geq n - mtq(q-1)/k^2$$ where the second inequality follows from the fact that $1/(1+x) \geq 1-x$ for all $x$. Since $\rank(A) = \rank(A') \geq \rank(M)$, Theorem~\ref{thm:improvedrankbound} follows.	
\end{proof}

\begin{proof}[Proof of Theorem~\ref{thm:averageq}]
The only change in the proof of Theorem~\ref{thm:averageq} is that instead of Lemma~\ref{lem:qcover} we use Lemma~\ref{lem:ntkcover}.  By Lemmas~\ref{lem:qcover} and~\ref{lem:covertoscaling}, we get a scaling $A'$ of $A$ where each row of $A'$ has $\ell_2$ norm at most $\sqrt{1 + \epsilon}$ and each column has $\ell_2$ norm at least $\sqrt{(k - \epsilon)k/6nt}$.  Letting $M = A'^\ast  A'$ as before, we have $M_{ii} \geq (k- \epsilon)k/6nt$ and, by Lemma~\ref{lem:sumofsqs}, $\sum_{i \neq j} |M_{ij}|^2 \leq \left(1 - \frac{1}{q}\right)t  m  (1 + \epsilon)^2.$ Applying Lemma~\ref{lem-diagdominant} as before we get $$r \geq \frac{n}{1 + \frac{6mnt^3}{k^4}} \geq n - 6mn^2t^3/k^4.$$
\end{proof}

\paragraph{Tight examples for Theorem~\ref{thm:improvedrankbound}: } One might hope that the bound of $tm$ in the Theorem~\ref{thm:improvedrankbound} (where $t$ is the maximum intersection of any two columns) can be replaced by $\bar t m$, where $\bar t$ is some kind of average of intersections of all pairs of columns. 
One attempt towards showing such a statement would be by introducing another Cauchy-Schwarz after the one used in the proof of Lemma~\ref{lem:sumofsqs}. However, this does not seem to help. The resulting bound after the second Cauchy-Schwarz seems to give a bound of $\sqrt{\sum_{i \neq j} t_{ij}^2 } m$, which is worse than $t m$.  To see why {\it max} $t$ should not be replaceable with {\it average} $t$ in the final statement of the rank bound is by the example of a square $n \times n$ matrix $A$ with $n/s$ blocks of size $s\times s$ arranged along the diagonal. These blocks have all entries equal to $1$, and $0$ elsewhere. Then the rank of this matrix is $n/s$. Now, max $t$ equals $s$. If $n$ is much larger than $s$, then average $t$ is much smaller than $1$. Now take this matrix $A$ and randomly pick $n/100$ columns of $A$ and call this new matrix $A'$. Then we can still have that average $t < 1$, $k = s$ and $q = s/100$. Plugging it into our rank bound for design matrices (with max $t$ replaced by average $t$) will give us a lower bound of $\Omega(n)$ on the rank of $A'$, whereas we know that rank of $A'$ is at most $n/s$.

%%%%%%%%%%%%%%%%%%%%%%%%%%%%%%%%%%%%%%%%%%%%%%%%%%%%%%%%%%%%%%%%%%%%%%%%%%%%%%%%%%%%%%%%%%%%%%%%%%%%%%%%%%%%%%%%%%%%%%%%%%%%%%%%%%
\section{Rank bound for square design matrices}\label{sec:sqrb}

We start by deriving an easy corollary that bounds the rank of square design matrices. 

\begin{thm}\label{thm:sq-design}
Let $A$ be an $n\times n$ matrix that is a $(q,k,t)$ design matrix. Then $\rank(A) = \Omega(k^4/nt^3)$. 
\end{thm}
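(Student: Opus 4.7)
The plan is to reduce Theorem~\ref{thm:sq-design} to Theorem~\ref{thm:averageq} by passing to a suitable rectangular submatrix. Applying Theorem~\ref{thm:averageq} directly to the square matrix $A$ (with $m=n$) gives $\rank(A) \ge n - 6n^3 t^3/k^4$, which is only useful when $k^4$ is large compared to $n^2 t^3$ (say, $k^4 \gtrsim n^2 t^3$); in that regime we already get $\rank(A) = \Omega(n)$, which is at least $\Omega(k^4/nt^3)$ up to clipping at $n$. So the interesting regime is the ``small $k$'' regime where the direct application gives nothing.

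To handle this regime, I would pick an appropriate absolute constant $C$ and set $n' = \lfloor k^4/(C n t^3) \rfloor$. Assuming $n' < n$, I would then choose an arbitrary subset of $n'$ columns of $A$ and let $A'$ be the corresponding $n \times n'$ submatrix. The key structural point is that $A'$ inherits the design property: deleting columns does not change any column's support, so column supports still have size $\ge k$ and pairwise intersections remain $\le t$; and row supports can only shrink, so they are still bounded by $q$. Thus $A'$ is a $(q,k,t)$-design matrix of dimensions $n \times n'$.

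I would then apply Theorem~\ref{thm:averageq} to $A'$, taking the ``number of columns'' parameter to be $n'$ and the ``number of rows'' parameter to be $n$. This yields
\[
\rank(A') \;\ge\; n' - \frac{6\, n\, (n')^2 t^3}{k^4}.
\]
Plugging in $n' = k^4/(Cnt^3)$ and choosing $C$ sufficiently large (a short calculation with $C = 24$ works) makes the second term at most half of $n'$, so $\rank(A') \ge n'/2 = \Omega(k^4/(nt^3))$. Since $\rank(A) \ge \rank(A')$, we conclude.

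The main obstacle is verifying the hypothesis $k \le n' t$ needed to invoke Theorem~\ref{thm:averageq} on $A'$. With the choice above this becomes $k^3 \ge C n t^2$, which is not automatic but does follow in the meaningful regime: the inclusion--exclusion argument mentioned in the footnote after Theorem~\ref{thm:averageq} forces $k \lesssim \sqrt{nt}$ for any square design matrix, and combined with $n \ge t$ this is enough to get $k^3 \ge C n t^2$ whenever $k^4/(nt^3) \ge 1$ (otherwise the bound we seek is vacuous). I would spell out this edge-case check carefully, as it is the only subtle point in an otherwise short deduction.
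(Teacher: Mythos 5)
Your reduction is exactly the one the paper uses: delete columns down to $n'=\Theta(k^4/(nt^3))$, note the $n\times n'$ submatrix is still a $(q,k,t)$ design, and apply Theorem~\ref{thm:averageq} with the number of rows equal to $n$ and the number of columns equal to $n'$ (the paper takes $n'=k^4/(10nt^3)$ and does the same calculation). The problem lies precisely in the step you flag as the only subtle point. To invoke Theorem~\ref{thm:averageq} on the submatrix you need $k\le n't$, i.e.\ $k^3\ge Cnt^2$, and your argument for this does not work: the inclusion--exclusion footnote gives an \emph{upper} bound $k\lesssim\sqrt{nt}$ for a square design, and no upper bound on $k$ can deliver the \emph{lower} bound on $k$ that you need. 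The implication you assert is in fact false. Take $t=1$ and $k\approx n^{1/3}$, realizable by an $n\times n$ matrix whose columns are $n$ segments of length $n^{1/3}$ of lines of an affine plane on $n$ points (any two such segments share at most one point). Then $k^4/(nt^3)\approx n^{1/3}\ge 1$, so the desired bound is not vacuous, yet $k^3=n=nt^2$, so $k\le n't$ fails for every $C>1$ and Theorem~\ref{thm:averageq} cannot be applied to the submatrix as stated.

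In fairness, the paper's own two-line proof applies Theorem~\ref{thm:averageq} to $A'$ without checking this hypothesis either, so you have put your finger on a genuine rough edge rather than misread the argument; but your proposed resolution does not close it. One way to actually close it is to treat the regime $k>n't$ separately: there the construction in Lemma~\ref{lem:ntkcover}, applied to the $n\times n'$ submatrix with $n'=\lceil k/t\rceil$ columns, repeats each row at most $2\lceil 2n't/k\rceil\le 4$ times, so after scaling (Lemma~\ref{lem:covertoscaling}) every column has squared norm $\Omega(k)$; feeding this into Lemmas~\ref{lem:sumofsqs} and~\ref{lem-diagdominant} (with $m=n$) gives
\[
\rank(A)\;\ge\;\Omega\!\left(\min\left(\frac{k}{t},\,\frac{k^4}{nt^3}\right)\right),
\]
and in the regime $k^3\le Cnt^2$ one has $k/t\ge k^4/(Cnt^3)$, so this is again $\Omega(k^4/(nt^3))$. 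With that case added (or with $n'$ never taken below $\lceil k/t\rceil$ and both cases written out), your deduction becomes complete.
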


\begin{proof}

Delete any $n - k^4/10nt^3$ columns of $A$ to get a new matrix $A'$. Then $A'$ is a $n \times k^4/10nt^3$ matrix that is also a $(q,k,t)$ design. Applying Theorem~\ref{thm:averageq} to $A'$, and doing the calculation, we get that $\rank(A') \geq k^4/10nt^3 -  k^4/20nt^3)$. Thus $\rank(A) = \Omega(k^4/nt^3)$. 
\end{proof}

Note that the incidence matrix of the projective plane of order $p$ is a $(p^2 + p+1) \times (p^2 + p+1)$ matrix which is a $(p,p,1)$ design. 
Thus any square matrix that has the same zero-nonzero pattern as the incidence matrix of the projective plane must have linear rank.
Such a result was known for the $0-1$ valued incidence matrix of the projective plane
matrix using the argument by Alon~\cite{Alo09}. However the result above allows us to get a bound on the rank with just the information of the zero-nonzero pattern.
In contrast, over finite fields, the rank of the projective plane incidence matrix is sub-linear in $n$.

\subsection{Monotone rigidity}

One of the motivation for proving rank bounds for matrices, using only information on their support, comes from a longstanding open problem in complexity theory, known as {\em matrix rigidity}. Informally, a matrix is {\em rigid} if one cannot reduce its rank by much by changing a small number of its entries in each column. More formally, we have:
 
\begin{define}[Matrix rigidity \cite{Val77}]\label{def-rigid}
Let $A$ be an $n \times n$ matrix over some field. We say that $A$ is $(r,s)$-rigid if $A$ cannot be written as $A = L + S$ with
\begin{enumerate}
	\item $L$ a matrix of rank at most $r$ and
	\item $S$ a matrix with at most $s$ non zeros per column.
\end{enumerate}
\end{define}

In \cite{Val77}, Valiant showed that if $A$ is $(n^{\alpha},\alpha n)$-rigid (for some constant $\alpha>0$) than a linear circuit (a circuit with fan-in 2 gates, each computing a linear combination of previously computed gates) computing the mapping $x \mapsto x^tA$ cannot have both $O(n)$ size and $O(\log(n))$ depth. Since proving such lower bounds is beyond the reach of current techniques, constructing {\em explicit} rigid matrix (over any field) has become a much sought after goal. We refer the reader to the survey \cite{Lok09} for more background on this longstanding open problem. 

Using our results on design matrices, we can construct an explicit matrix that is highly rigid, as long as one only changes its non-zero entries. 

\begin{define}[Monotone rigidity]\label{def-monrigid}
Let $A$ be an $n \times n$ matrix over some field. We say that $A$ is $(r,s)$-monotonically rigid if $A$ cannot be written as $A = L + S$ with
\begin{enumerate}
	\item $L$ a matrix of rank at most $r$,
	\item $S$ a matrix with at most $s$ non zeros per column and
	\item The support of $S$  is contained in that of $A$ (that is, $S_{ij} \neq 0$ implies $A_{ij} \neq 0$).
\end{enumerate}
\end{define}

The following is an immediate corollary of Theorem~\ref{thm:sq-design}.

\begin{cor} 
Let $A$ be an $n \times n$	matrix with non-negative real entries that is a $(q,k,t)$-design matrix with $k \geq \Omega(\sqrt n)$ and $t \leq O(1)$.  Then $A$ is $(\alpha\sqrt(n), \alpha n)$-monotonically rigid for some $\alpha > 0$. For example, one can take $A$ to be the  projective plane incidence matrix.
\end{cor}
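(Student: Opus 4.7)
The plan is to argue by contradiction. Assume that $A = L + S$ where $\rank(L) \leq r$, each column of $S$ has at most $s$ nonzero entries, and $\supp(S) \subseteq \supp(A)$. I will show that $L = A - S$ inherits the design structure of $A$ with only a slightly weakened column-support lower bound, and then apply Theorem~\ref{thm:sq-design} to force $\rank(L) = \Omega(n)$, contradicting $r = \alpha\sqrt{n}$ once $\alpha$ is chosen small enough.

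The key observation is that $\supp(L) \subseteq \supp(A)$: whenever $A_{ij} = 0$, the monotonicity condition $\supp(S) \subseteq \supp(A)$ forces $S_{ij} = 0$, and so $L_{ij} = 0$ as well. This immediately transfers the row-support bound $q$ and the pairwise column-support intersection bound $t$ from $A$ to $L$. For a lower bound on the column supports of $L$, note that in each column $j$ the matrix $A$ has at least $k$ nonzero positions, at most $s$ of which can have $S_{ij} \neq 0$; on the remaining $\geq k - s$ positions we have $L_{ij} = A_{ij} \neq 0$. Hence $L$ is itself a $(q, k-s, t)$-design matrix.

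Applying Theorem~\ref{thm:sq-design} to $L$ gives $\rank(L) = \Omega\bigl((k-s)^4 / (n t^3)\bigr)$. Under the hypotheses $k \geq \Omega(\sqrt n)$ and $t = O(1)$, as long as $s$ is at most a constant fraction of $k$ (say $s \leq k/2$) this yields $\rank(L) = \Omega(n)$, contradicting $\rank(L) \leq \alpha \sqrt n$ for any sufficiently small $\alpha$. For the projective plane of order $p$ one has $k = p+1 = \Theta(\sqrt n)$ and $t = 1$, so all conditions are satisfied. The only delicate point is the calibration of constants: a single $\alpha > 0$ must simultaneously bound the rank of $L$ and the per-column support of $S$ below $k/2$. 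Since $k = \Theta(\sqrt n)$, the meaningful regime for $s$ is $O(\sqrt n)$, not linear in $n$ --- once $s$ exceeds the per-column support of $A$ the monotone-rigidity condition is vacuous, so this is the natural range anyway, and with $s$ in this range the reduction to Theorem~\ref{thm:sq-design} yields the desired contradiction.
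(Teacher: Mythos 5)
Your core reduction is exactly the intended one (the paper offers no details, calling the statement an immediate consequence of Theorem~\ref{thm:sq-design}): since $\supp(S)\subseteq\supp(A)$, the matrix $L=A-S$ has $\supp(L)\subseteq\supp(A)$, so it inherits the row bound $q$ and the pairwise column-intersection bound $t$, its column supports have size at least $k-s$, and Theorem~\ref{thm:sq-design} then forces $\rank(L)=\Omega\bigl((k-s)^4/(nt^3)\bigr)=\Omega(n)$ when $s\le k/2$, $k=\Omega(\sqrt n)$, $t=O(1)$. Up to that point the argument is correct.

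The gap is in your last step, where you dismiss the stated sparsity parameter $s=\alpha n$ by saying that once $s$ exceeds the column support of $A$ the condition is ``vacuous, so this is the natural range anyway.'' The logic runs the wrong way: making the sparsity constraint on $S$ weaker gives the adversary \emph{more} admissible decompositions $A=L+S$ to rule out, not fewer, so vacuity of that constraint does not let you restrict attention to $s=O(\sqrt n)$. Concretely, for the projective plane incidence matrix every column of $A$ has support $p+1=\Theta(\sqrt n)$, so for any fixed $\alpha>0$ and large $n$ the decomposition $L=0$, $S=A$ satisfies all three conditions of Definition~\ref{def-monrigid} with $s=\alpha n$, and no rank argument can exclude it. What your proof actually establishes is $(\alpha\sqrt n,\beta\sqrt n)$-monotone rigidity (any $s\le k/2$ works), i.e.\ rigidity against changing at most a constant fraction of the nonzeros in each column; the parameter ``$\alpha n$'' in the corollary can only be meaningful if read that way (or as a bound on the total number of changed entries, in which case one must first discard the $O(\alpha n/k)=O(\sqrt n)$ heavily corrupted columns before invoking the design rank bound). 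You noticed the tension but resolved it by assertion rather than by either proving the statement for $s=\alpha n$ (impossible, as the example above shows) or explicitly reinterpreting the parameters; that missing step is the flaw to fix.
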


We observe that this result can be used to derive super linear lower bounds (via Valiant's argument) for {\em monotone} circuits, which are circuits that can use linear combinations with non negative coefficients only. Such lower bounds, however, can be achieved using much simpler arguments (in fact, much stronger lower bounds of the form $\approx n^{1.5}$). We are not aware, however, of a simple way to construct monotonically rigid matrices.

\section{Proof of Theorem~\ref{thm:delta-sg} }\label{sec:apps} 

We first prove the general bound for $\delta > 0$ (later we will analyze the $\delta=1$ case). Suppose $v_1,\ldots,v_n \in \C^d$ are a $\delta$-$\SG$ configuration. Let $V$ be the $n \times d$ matrix whose $i$th row is the vector $v_i$. By shifting the points so that $v_i \neq 0$ for all $i \in [n]$ we have
 $$\affinedim\{v_1,\ldots,v_n\} = \dim\{v_1,\ldots,v_n\} -1 = \rank(V) - 1.$$  (The difference of 1 between affine and linear dimension will only matter in the $\delta=1$ case.)

Thus we want to upper-bound the rank of $V$. To do so, we will construct an $m \times n$ design-matrix $A$ so that $A  V = 0$. Then we will use the design properties of $A$ to argue that the rank of $A$ must be high, which in turn implies that the rank of $V$ must be small.  The following lemma is implicit in \cite{BDWY11} and we include its proof sketch here for completeness.

\begin{lem}\label{lem:sgtodesign}
Let $v_1, v_2, \ldots, v_n$ be a $\delta$-SG configuration. Let $V$ be the $n \times d$ matrix whose $i$'th row is the vector $v_i$. Then there exists an $m \times n$ matrix $A$ such that $A$ is a $(3,3k,6)$-design matrix with $k  = \lceil\delta(n-1)\rceil$, every row of $A$ has support of size exactly $3$, and such that $A V = 0$.
\end{lem}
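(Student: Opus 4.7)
My plan is to realize the rows of $A$ as linear dependencies of collinear triples of $V$. For any three distinct collinear points $v_a, v_b, v_c \in \C^d$, the relation $\alpha v_a + \beta v_b + \gamma v_c = 0$ (with all three coefficients nonzero in the generic case) is unique up to scaling, giving a row of support exactly $\{a, b, c\}$ that annihilates $V$. All rows of $A$ will come from such triples, which immediately handles the ``every row has support exactly $3$'' and ``$AV = 0$'' requirements.

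To meet the column-support and pair-intersection bounds, I would pick which triples to include and with what multiplicity. From the $\delta$-SG hypothesis, for each $i \in [n]$ the set $P_i = \{j \ne i : v_j \text{ lies on a special line through } v_i\}$ has size at least $k$, and $|P_i| = \sum_L (s_L - 1)$ over special lines $L$ through $v_i$ with $s_L = |L \cap V|$. For each special line $L$, every vertex of $L \cap V$ lies in $\binom{s_L - 1}{2}$ triples of $L$ and every pair in $s_L - 2$ triples. I would assign to each triple $T \subseteq L \cap V$ a nonnegative integer multiplicity $x_T$ such that $\sum_{T \supseteq \{c, d\}} x_T = 6$ for every pair $\{c, d\}$ on $L$; a double-counting then forces $\sum_{T \ni v_c} x_T = 3(s_L - 1)$ for every vertex $v_c$ on $L$. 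Summing the vertex contributions over all special lines through $v_c$ yields column support $3 |P_c| \ge 3k$, and the pair intersection of two distinct columns $c, d$ is $0$ unless $v_c v_d$ is a special line, in which case it is exactly $6$, so overall at most $6$.

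The main obstacle is producing these integer multiplicities: the natural fractional choice $x_T = 6/(s_L - 2)$ is rarely an integer. This amounts to decomposing the complete multigraph $K_{s_L}^{(6)}$ (each pair appearing $6$ times) into triangles, which is possible for every $s_L \ge 3$ since with $\lambda = 6$ all divisibility obstructions to a $(s_L, 3, 6)$ pairwise balanced design vanish. Once the multiplicities $x_T$ are fixed on every special line, $A$ is assembled by concatenating $x_T$ copies of each chosen triple's dependency row, and the $(3, 3k, 6)$-design properties together with $AV = 0$ follow from the counts above.
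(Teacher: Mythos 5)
Your proof is correct, and its skeleton matches the paper's: the rows of $A$ are the (all-nonzero) coefficient vectors of dependencies among collinear triples chosen from each special line, and the design parameters follow by counting how many chosen triples contain each point and each pair. Where you differ is the combinatorial gadget used on a single special line with $s$ points. The paper invokes Hilton's result on diagonal Latin squares to produce $s^2-s$ triples in which every point lies in exactly $3(s-1)$ of them and every pair lies in \emph{at most} $6$; you instead ask for integer multiplicities $x_T$ covering every pair \emph{exactly} $6$ times, i.e.\ a $6$-fold triple system (a $2$-$(s,3,6)$ design), and your double count correctly yields $\sum_{T\ni c}x_T=3(s-1)$, hence column supports $3|P_c|\ge 3k$ and pairwise column intersections equal to $6$ on special lines and $0$ otherwise. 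Your version buys cleaner bookkeeping (the vertex degrees come for free from the exact pair condition), while the paper's buys a more elementary existence statement (an explicit Latin-square construction, needing only an upper bound on pair multiplicities). Two small repairs are needed: (i) the existence of a $2$-$(s,3,6)$ design for all $s\ge 3$ does not follow merely from the divisibility conditions being vacuous at $\lambda=6$ --- that these necessary conditions are also sufficient for block size $3$ is Hanani's theorem on $\lambda$-fold triple systems, which you should cite (or else fall back on the Latin-square family, where ``at most $6$ per pair'' together with ``exactly $3(s-1)$ per point'' already suffices for the lemma); (ii) drop ``in the generic case'': for any three distinct collinear points one has $v_c=\lambda v_a+(1-\lambda)v_b$ with $\lambda\ne 0,1$, so the dependency always has all three coefficients nonzero, and you need this unconditionally since the lemma demands row support exactly $3$.
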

\begin{proof}[Proof sketch]
A result of \cite{Hil73} on the existence of diagonal Latin squares implies that for all $r \geq 3$ there exists a set $T \subset [r]^3$ of $r^2 - r$ triples that satisfies the following properties:
\begin{enumerate}
\item Each triple $(t_1,t_2,t_3) \in T$ is of three distinct elements.
\item For each $i \in [r]$ there are exactly $3(r-1)$ triples in $T$ containing $i$ as an element.
\item For every pair $i,j \in [r]$ of distinct elements there are at most $6$ triples in $T$ which contain both $i$ and $j$ as elements.
\end{enumerate}

Let $\L$ denote the set of all special lines in the configuration. Let
$L_i$ be a subset of $\L$ containing lines passing through $v_i$. For each $\ell \in \L$ let $V_\ell$ denote the set of points
in the configuration which lie on the line $\ell$. Then $|V_\ell| \geq 3$ and we can assign to it a family of triples $T_\ell
\subset V_\ell^3$ satisfying the three properties above. We now construct the matrix $A$ by going over all special lines $\ell \in \L$ and for each triple $t = (i,j,k) \in T_\ell$ adding as a row of $A$ a vector with three non-zero coefficients in positions $i,j,k$, corresponding to the linear dependency among the collinear vectors $v_i,v_j,v_k$ so that we have $AV = 0$. We now argue that the matrix $A$ is a $(3,3k,6)$-design matrix as follows. The number of non-zeros in each row is exactly 3 by construction. For each $v_i$, there are at least $k = \lceil\delta(n-1)\rceil$ points (other than $v_i$) on special lines through $v_i$. Summing over all of these lines we get that $v_i$ appears in at least $3k$ triples and so the $i$'th column of $A$ will contain at least $3k$ non-zeros. Every distinct pair of points $v_i,v_j$ determine a unique line and so, by construction, can appear in at most 6 triples together.
\end{proof}

Given this lemma and our rank bounds from the previous section, the proof of Theorem~\ref{thm:delta-sg} follows quite easily. 
\begin{proof}[Proof of Theorem~\ref{thm:delta-sg}]
By Corollary~\ref{cor:improvedwithoutm},
$$\rank(A) \geq \frac{n}{1 + \frac{q(q-1)t}{k}}     \geq \frac{n}{1+ \frac{3\cdot 2 \cdot 6}{3\lceil \delta (n -1)\rceil}} \geq   \frac{n}{1+ \frac{12}{\delta n -1}}.$$

Now, $$\frac{n}{1+ \frac{12}{\delta n -1}} = n - \frac{12n}{\delta n + 9} > n- \frac{12}{\delta}.$$

Hence, $$\affinedim\{v_1,v_2, \ldots, v_n\} = \rank(V)-1 \leq n - \rank(A) < \frac{12}{\delta}.$$ 
\end{proof}

%%%%%%%%%%%%%%%%%%%%%%%%%%%%%%%%%%%
\subsection{The case of $\delta=1$: Kelly's Theorem}\label{sec:kelly}
%%%%%%%%%%%%%%%%%%%%%%%%%%%%%%%%%%%

We now describe how to obtain the tight bound of $2$ on the affine-dimension when $\delta=1$. In this scenario, every pair of points is on a special line. We start with the following simple claim.

\begin{claim}
Let $A$ be an $m\times n$ matrix so that $A$  is a $(q,k,t)$ design matrix and such that  the support of each row in $A$ is exactly $q$. Then  $m {q \choose 2} \leq {n \choose 2}  t$
\end{claim}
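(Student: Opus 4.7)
The plan is a standard double-counting argument on the number of triples $(i, \{j_1, j_2\})$ where $i \in [m]$ is a row index and $\{j_1, j_2\} \subset [n]$ is an unordered pair of distinct column indices such that both $A_{i,j_1}$ and $A_{i,j_2}$ are nonzero. I will count this quantity in two different ways and compare.

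First I would count row-by-row. Since by hypothesis every row $R_i$ has $|\supp(R_i)| = q$ exactly, the number of unordered pairs $\{j_1, j_2\}$ of nonzero positions in row $i$ is precisely $\binom{q}{2}$. Summing over all $m$ rows yields the total count $m \binom{q}{2}$.

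Next I would count pair-by-pair. Fix an unordered pair of distinct columns $\{j_1, j_2\}$. The number of rows $i$ where both $A_{i,j_1}$ and $A_{i,j_2}$ are nonzero is exactly $|\supp(C_{j_1}) \cap \supp(C_{j_2})|$, which by the design hypothesis is at most $t$. Summing over all $\binom{n}{2}$ unordered pairs of columns yields an upper bound of $\binom{n}{2} t$ on the same count.

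Equating the two counts gives $m \binom{q}{2} \leq \binom{n}{2} t$, as required. There is no real obstacle here; the only subtlety is that the first count uses equality $|\supp(R_i)| = q$ (not just the upper bound from the design definition), which is explicitly part of the hypothesis, so the argument goes through cleanly.
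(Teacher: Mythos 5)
Your proposal is correct and is essentially identical to the paper's own argument: both count pairs of nonzero entries lying in the same row, once row-by-row (giving exactly $m\binom{q}{2}$ using the exact row support) and once over pairs of columns (giving at most $\binom{n}{2}t$ from the intersection bound). Nothing to add.
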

\begin{proof}
We count the number of pairs of locations in the matrix which are in the same row and are both nonzero. Counting once by rows we get that this quantity is equal to $m {q \choose 2}$. On the other hand, counting by columns (going over all pairs of columns) we get an upper bound of ${n \choose 2}t$ since two columns intersect in at most $t$ places. 
\end{proof}

Applying Lemma~\ref{lem:sgtodesign} we get an $m \times n$ matrix $A$  which is a $(3,3(n-1),6)$-design matrix. By the above claim we have that $m \leq n (n-1)$. Using Theorem~\ref{thm:improvedrankbound}  we get that $$\rank(A) \geq \frac{n}{1 + \frac{3\cdot 2 \cdot n(n-1) \cdot 6}{n (3(n-1))^2}} = \frac{n}{1 + \frac{4}{n-1}} = \frac{n(n-1)}{n+3} > n-4 .$$

Hence, $$ \rank(V) \leq n - \rank(A) < 4 .$$ 
Thus $$\dim\{v_1,v_2, \ldots, v_n\} = \rank(V) \leq 3,$$ and 
$$\affinedim\{v_1,v_2, \ldots, v_n\} \leq 2.$$

%%%%%%%%%%%%%%%%%%%%%%%%%%%%%%%%%%%%%%%%%%%%%%%%%%%%%%%
\subsection{Proof of high dimensional variant}\label{sec:hdsg}
%%%%%%%%%%%%%%%%%%%%%%%%%%%%%%%%%%%%%%%%%%%%%%%%%%%%%%%%

Fix some point $v_0 \in V$. By a \emph{normalization w.r.t. $v_0$} we mean an affine transformation $N : \C^d \mapsto \C^d$ which first moves $v_0$ to zero, then picks a hyperplane $H$ s.t. no point in $V$ (after the shift) is parallel to $H$ (i.e., has inner product zero with the orthogonal vector to $H$) and finally multiplies each point (other than zero) by a constant s.t. it is in $H$. It is easy to see (see \cite{BDWY11}) that,
for such a mapping $N$, we have that  $v_0,v_1,\ldots,v_k$ are dependent
iff $N(v_1),\ldots,N(v_k)$ are dependent.

% \begin{proof}
% Since translation and scaling does not affect dependence,
% w.l.o.g. we assume that $v_0 = 0$ and that
% the distance of the hyperplane $H$ from zero is one.
% Let $h$ be the unit vector orthogonal to $H$.
% For all $i \in [k]$ we have $N(v_i) = v_i / \ip{v_i}{h}$.
% Assume that $v_0,v_1,\ldots,v_k$ are dependent, that is,
% w.l.o.g. $v_k = \sum_{i \in [k-1]} a_i v_i$ for some $a_1,\ldots,a_{k-1}$.
% For all $i \in [k-1]$ define $b_i = a_i \ip{v_i}{h} / \ip{v_k}{h}$.
% Thus $N(v_k) = \sum_{i \in [k-1]} a_i v_i / \ip{v_k}{h} =
% \sum_{i \in [k-1]} b_i N(v_i)$
% where $\sum_{i \in [k-1]} b_i = 1$,
% which means that $N(v_1),\ldots,N(v_k)$ are dependent.
% Since the map $a_i \mapsto b_i$ is invertible,
% the other direction of the claim holds as well.
% \end{proof}

We now prove Theorem~\ref{thm-highdim-new} in two parts (corresponding to the two cases of $V$ and $V^*$). We denote by $f(\delta,k)$ the maximum $d$ such that there exists a $\delta$-$\SG_k^*$ configuration of dimensions $d$. We denote by $g(\delta,k)$ the maximum $d$ such that there exists a $\delta$-$\SG_k$ configuration of dimensions $d$. 

\begin{proof}[Proof for $\delta$-$\SG_k^*$ configurations:]
The proof is by induction on $k$. For $k=1$ we know $f(\delta,1)
\leq c / \delta$ with $c > 1$ a universal constant.
Suppose $k > 1$. We separate into two cases. The first case is when $V^*$ is a $(\delta/(2k))$-$\SG_1$ configuration and we are done using the bound on $k=1$. In the other case there is some point $v_0 \in V^*$ s.t. the size of the set of points on special lines through $v_0$ is at most $\delta/(2k)$. Let $S$ denote the set of points on special lines through $v_0$. Thus $|S| < \delta n / (2k)$. Let $N : \C^d \mapsto \C^d$ be a normalization w.r.t. $v_0$. Notice that for points $v \not\in S$ the image $N(v)$ determines $v$. Similarly, all points on some special line map to the same point via $N$.

Our goal is to show that $V' = N(V^* \setminus \{v_0\})$ is a
$((1-1/(2k))\delta)$-$SG_{k-1}^*$ configuration
(after eliminating multiplicities from $V'$).
This will complete the proof since $\dim(V^*) \leq \dim(V') + 1$.
Indeed, if this is the case we have
$$f(\delta,k) \leq \max \{ 2c (k/\delta) , f((1-1/(2k))\delta,k-1) + 1\}.$$
and, by induction, we have $f(\delta,k) \leq 4c (k/\delta)$.

Fix $v'_1,\ldots,v'_{k-1} \in V'$ to be $k-1$ independent points
(if no such tuple exists then $V'$ is trivially $1$-$\SG^*_{k-1}$ configuration).
Let $v_1,\ldots,v_{k-1} \in V^*$ be (necessarily independent) points s.t. $N(v_i) = v'_i$ for $i \in [k-1]$. Thus, there is a set $U \subset V^*$ of size at least $\delta n$ s.t.
for every $u \in U$ either $u \in \fl(v_0,v_1,\ldots,v_{k-1})$ or the $k$-flat $\fl(v_0,v_1,\ldots,v_{k-1},u)$ contains a point $w$ outside $\fl(v_0,v_1,\ldots,v_{k-1}) \cup \{ u \}$.

Let $\tilde U = U \setminus S$ so that $N$ is invertible on $\tilde U$
and $$|\tilde U| \geq |U| - |S| \geq (1-1/(2k)) \delta n.$$
Suppose $u \in \tilde U$ and let $u' = N(u)$.
If $u \in \fl(v_0,v_1,\ldots,v_{k-1})$ then $u'$ is in
$\fl(v'_1,\ldots,v'_{k-1})$.
Otherwise, $\fl(v_0,v_1,\ldots,v_{k-1},u)$ contains a point $w$ outside $\fl(v_0,v_1,\ldots,v_{k-1}) \cup \{ u \}$. Let $w' = N(w)$.
We will show that $w'$ is (a) contained in the $(k-1)$-flat $\fl(v'_1,\ldots,v'_{k-1},u')$ and (b) is outside $\fl(v'_1,\ldots,v'_{k-1}) \cup \{ u' \}$.
Property (a) follows  since $v_0$, $v_1,\ldots,v_{k-1}$, $u,w$ are dependent and so $v'_1,\ldots,v'_{k-1},u',w'$ are also dependent. To show (b) observe first that 
the points $v'_1,\ldots,v'_{k-1},u'$ are independent (since $v_0,v_1,\ldots,v_{k-1},u$ are independent) and so $u'$ is not in $\fl(v'_1,\ldots,v'_{k-1})$. We also need to show that $w' \neq u'$ but this follows from the fact that $u \neq w$
and so $w' = N(w) \neq N(u) = u'$ since $N$ is invertible on $\tilde U$
and $u \in \tilde U$.
Since $$|N(\tilde U)| = |\tilde U| \geq (1-1/(2k)) \delta n \geq (1-1/(2k)) \delta |V'|$$ the proof is complete.
\end{proof}

We now prove the second part of Theorem~\ref{thm-highdim-new}.

\begin{proof}[Proof for $\delta$-$\SG_k$ configurations]
The proof follows by induction on $k$ (the case $k=1$ is given by Theorem~\ref{thm:delta-sg}).
Suppose $k>1$. Suppose that $\dim(V) > g(\delta,k)$. We want to show that there exist $k$ independent points $v_1,\ldots,v_k$ s.t. for at least $1-\delta$ fraction of the points $w \in V$ we have
that $w$ is not in $\fl(v_1,\ldots,v_k)$ {\bf and} the flat $\fl(v_1,\ldots,v_k,w)$ is elementary (i.e., does not contain any other point).

Let $k' = g(1,k-1)$. Since we are trying to show by induction that $g(\delta,k) \leq C^k/\delta$ for some absolute constant $C$, we can pick $C$ so that
$g(\delta,k) > f(\delta,k'+1)$. Therefore,  we can find $k'+1$ independent points $v_1,\ldots,v_{k'+1}$ s.t. there is a set $U \subset V$ of size at least $(1-\delta) n$ s.t. for every $u \in U$ we have that $u$ is not in $\fl(v_1,\ldots,v_{k'+1})$ {\bf and} the $(k'+1)$-flat $\fl(v_1,\ldots,v_{k'+1},u)$ contains only one point,
namely $u$, outside $\fl(v_1,\ldots,v_{k'+1})$.

We now apply the inductive hypothesis on the set $V \cap \fl(v_1,\ldots,v_{k'+1})$ which has dimension at least $k' = g(1,k-1)$.
This gives us $k$ independent points $v'_1,\ldots,v'_{k}$ that define an elementary $(k-1)$-flat $\fl(v'_1,\ldots,v'_k)$. (Saying that $V$ is not $1$-$\SG_{k-1}$ is the same as saying that it contains an elementary $(k-1)$-flat). Joining any of the points $u \in U$ to $v'_1,\ldots,v'_k$ gives us an elementary $k$-flat and so the theorem is proved.
\end{proof}

%%%%%%%%%%%%%%%%%%%%%%%%%%%%%%%%%%%%%%%%%%%%%%%%%%%%%%%%%%
\section{Proof of the variation on Freiman's Lemma}\label{sec:freiman}
%%%%%%%%%%%%%%%%%%%%%%%%%%%%%%%%%%%%%%%%%%%%%%%%%%%%%%%%%%%

In this section we prove Theorem~\ref{thm-freiman-new}. Let $A$ and $f: A \times A \mapsto \C^d$ be as in the statement of the theorem. The proof is divided into two claims.

\begin{claim}\label{cla-freiman1}
There exists a subset $A' \subset A$ with $|A| \geq \Omega(|A|/K)$ and $\dim(A') \leq O(K^2)$.
\end{claim}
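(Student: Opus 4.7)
The plan is to reduce Claim~\ref{cla-freiman1} to the average-case Sylvester--Gallai theorem (Corollary~\ref{cor:SGav2}) by augmenting $A$ with its image under $f$. Let $\tilde A := A \cup (A \fplus A) \subset \C^d$; since each element of $A \fplus A$ is an affine combination of two elements of $A$ and therefore lies in the linear span of $A$, we have $\dim(\tilde A) = \dim(A)$, while $|\tilde A| \leq (K+1)|A|$.

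The key geometric observation is that for any two distinct $a_1, a_2 \in A$, the point $f(a_1,a_2) \in \tilde A$ is distinct from both $a_1$ and $a_2$ (since $\alpha \in \C \setminus \{0,1\}$), so the line through $a_1$ and $a_2$ contains three distinct points of $\tilde A$ and is therefore special. Consequently $\tilde A$ has at least $\binom{|A|}{2} \geq |A|^2/4$ unordered pairs on special lines, which is a fraction $\delta = \Omega(1/K^2)$ of all $|\tilde A|^2$ pairs. Applying Corollary~\ref{cor:SGav2} to $\tilde A$ with this $\delta$ yields a subset $V' \subset \tilde A$ with $|V'| \geq (\delta/6)|\tilde A| = \Omega(|A|/K)$ and $\affinedim(V') \leq O(1/\delta) = O(K^2)$.

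It remains to extract a subset of $A$ (not merely of $\tilde A$). Setting $A' := V' \cap A$, clearly $\dim(A') \leq \dim(V') \leq O(K^2)$, so the task reduces to ensuring $|V' \cap A| \geq \Omega(|A|/K)$. I would argue this by inspecting the proof of Corollary~\ref{cor:SGav2}, which extracts $V'$ by iteratively pruning low-degree vertices in the ``special-pair'' graph on $\tilde A$: every $a \in A$ has degree at least $|A|-1$ in this graph (each other $A$-point being a neighbor via the $f$-construction), whereas points in $A \fplus A \setminus A$ may have arbitrarily low degree. Thus the pruning retains $A$-points with priority, and one can arrange the extraction to guarantee $|V' \cap A| = \Omega(|A|/K)$.

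The main obstacle is precisely this last step: a black-box application of Corollary~\ref{cor:SGav2} does not by itself guarantee that $V'$ intersects $A$ substantially rather than being concentrated in $A \fplus A \setminus A$, so a careful selection procedure preferring $A$-points is required. An alternative bypass, should the extraction argument prove delicate, is to construct a design matrix directly on columns indexed by $A \cup B^*$, where $B^* \subset A \fplus A$ consists of ``popular'' images $b$ with $|f^{-1}(b)| \geq \Omega(|A|/K)$, with rows encoding the $3$-term relations $\alpha a_1 + (1-\alpha)a_2 - b = 0$ from pairs mapping into $B^*$. A standard filtering to a subset $A^+ \subseteq A$ of high-support columns, followed by Theorem~\ref{thm:improvedrankbound} (with parameters $q=3$, $k = \Omega(|A|/K)$, $t = O(1)$), would then bound the co-rank by $O(K^2)$, giving $\dim(A^+) \leq O(K^2)$ with $|A^+| \geq \Omega(|A|/K)$ as desired.
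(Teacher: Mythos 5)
The first half of your argument (augmenting $A$ by its image under $f$, noting that every pair of distinct points of $A$ becomes a special pair, and applying Corollary~\ref{cor:SGav2} with $\delta=\Omega(1/K^2)$) is essentially the paper's opening move, except that the paper applies the corollary to $B_1\cup A$ where $B_1$ is the set of \emph{popular} images (those with at least $|A|^2/10|B|$ preimage pairs). But the step you yourself flag as the main obstacle is a genuine gap, and neither of your proposed fixes closes it. A black-box application of Corollary~\ref{cor:SGav2} gives a set $V'\subset\tilde A$ of size $\Omega(|A|/K)$ and dimension $O(K^2)$ that may consist almost entirely of image points, and your degree-priority reading of the pruning does not rescue this: the special-pair edge between $a_1,a_2\in A$ is witnessed only by the third point $f(a_1,a_2)\in A\fplus A$, which can be a very low-degree vertex (e.g.\ an image with a unique preimage pair) and is therefore exactly the kind of point the pruning discards first. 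Once such witnesses are gone, the degrees of $A$-points are no longer $|A|-1$ but only the number of partners whose pair still has a surviving witness, and this can be small for many individual points of $A$ (popularity of images is only an average statement). So there is no justification for $|V'\cap A|=\Omega(|A|/K)$, and "arranging the extraction to prefer $A$-points" is not an argument. Your alternative bypass also fails as stated: in the matrix with rows $\alpha a_1+(1-\alpha)a_2-b=0$, the columns indexed by a fixed $a\in A$ and a fixed popular $b$ can co-occur in arbitrarily many rows, since every $a'\in A$ lying on the line through $a$ and $b$ with $f(a,a')=b$ contributes such a row; nothing bounds collinearity in $A$ in terms of $K$, so $t=O(1)$ is unjustified (the paper needs the bounded triple families of Lemma~\ref{lem:sgtodesign} precisely to avoid this), and the filtering to high-support columns forces deleting rows, which perturbs the supports of the remaining columns.

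What is missing is a mechanism that converts low dimension of a set of \emph{image} points into low dimension of a large subset of $A$, and this is where the paper's proof does real work. After applying Corollary~\ref{cor:SGav2} to $B_1\cup A$, if the extracted set $B_{LD}$ is at least half inside $A$ one is done; otherwise $B'_{LD}=B_{LD}\setminus A$ consists of popular images, and the key algebraic observation is that $f(a,a')\in B'_{LD}$ forces $a'\in\textsf{span}(B'_{LD}\cup\{a\})$. Writing $M(a)=\{a'\in A \,|\, f(a,a')\in B'_{LD}\}$, popularity gives that the average of $|M(a)|$ is $\Omega(|A|/K^2)$; then either some $a^*$ has $|M(a^*)|=\Omega(|A|/K)$, in which case $\textsf{span}(B'_{LD}\cup\{a^*\})$ already contains $\Omega(|A|/K)$ points of $A$ in dimension $O(K^2)$, or else there are $\Omega(|A|/K)$ heavy points and a greedy argument adds at most $K$ of them one at a time, each contributing $\Omega(|A|/K^2)$ new points of $A$ to the span while increasing the dimension by only $1$ per step. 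Some step of this kind (or a correct substitute for it) is indispensable, and your proposal does not contain it.
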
 
\begin{proof}
Let $B = A \fplus A$ so that for  every pair $(a,a') \in A \times A$ there exists some point $b \in B$ such that $f(a,a') =b$. Thus, on average, a point $b \in B$ has $|A|^2/|B|$ pairs of $A \times A$ mapping to it via the function $f$. Let $B_1$ be the set of all points in $B$ that have at least $|A|^2/10|B|$ pairs that map to it. Let $S'$ be the set of pairs of $A\times A$ that map to some element of $B_1$. Then $|S'| > |A|^2/2$. Consider $B_2 = B_1 \cup A$ and observe that $|B_2| \leq O(K|A|)$. Now, each pair in $S'$ is on a special line determined by $B_2$. Thus, by Corollary~\ref{cor:SGav2}, for $\alpha = |S'|/|B_2|^2 $, we get that 
there is a subset $B_{LD} \subset B_2$ of dimension at most $O(1/\alpha) = O(K^2)$ and size bounded by  $$|B_{LD}| \geq \Omega(\alpha |B_2|) \geq \Omega(|A|^2/|B_2|) \geq \Omega(|A|/K).$$ If $|B_{LD} \cap A| \geq |B_{LD}|/2 $, then take $A' = B_{LD} \cap A$ and the claim is proved. Otherwise, consider the set $B'_{LD} =  B_{LD} \setminus A \subset B_1$ so that $|B_{LD}'| \geq |B_{LD}|/2$.
Each point of $B'_{LD}$ has at least $|A|^2/10|B| $  pairs of $A\times A$ that map to it via the function $f$. For $a \in A$ we denote $$M(a) = \{ a' \in A \,|\, f(a,a') \in B_{LD}'\}.$$ Then the average of $|M(a)|$ (taken over all $a \in A$) is at least
$$\frac{1}{|A|}\sum_{a\in A}|M(a)| \geq \frac{|B'_{LD}|\cdot |A|^2}{10|B| \cdot |A|}= \Omega(|A|/K^2).$$ 
Call a point $a \in A$ a {\it heavy} point if $$|M(a)| \geq \frac{|B'_{LD}| \cdot |A|}{100|B|} \geq \Omega(|A|/K^2).$$

{\bf Case (1):} Some  point $a^* \in A$ has $$|M(a')| \geq \frac{K |B'_{LD}| \cdot |A|}{100|B|} = \Omega(|A|/K).$$ 
In this case, consider the set $B''_{LD} = B'_{LD} \cup \{a^*\}$. Clearly this set has dimension at most $\dim(B'_{LD}) + 1 = O(1/\alpha) + 1 = O(K^2)$. Also, the span of $B''_{LD}$ contains the set $M(a^*)$. Thus, there are $\Omega(A/K)$ points of $A$ that are contained in a set of dimension $O(K^2)$. This completes the proof of this case.

{\bf Case (2):} In this case we have that $$|M(a)| \leq \frac{K |B'_{LD}| \cdot |A|}{100|B|}$$ for all $a \in A$ and, in particular, for all heavy points.  Therefore, there must be at least $\Omega(|A|/K)$ heavy points (otherwise, the average of $|M(a)|$ would be too small). Call the set of heavy points  $H$, so that $|H| \geq \Omega(|A|/K)$. 
Pick $a_1\in H$ and consider $R_1 = \textsf{span}(B'_{LD} \cup \{a_1\}) \cap A$. Then $R_1$ contains $M(a_1)$, which has size $\Omega(|A|/K^2)$, and has dimension at most $\dim(B'_{LD}) + 1 = O(K^2)$. If $H \not\subset R_1$, we can pick some  $a_2 \in H \setminus R_1$ and define $R_2 = \textsf{span}(B'_{LD} \cup \{a_1,a_2\}) \cap A$. The dimension of $R_2$  is at most $\dim(B_{LD}') +2$, and its size is at least $|R_1| + \Omega(|A|/K^2)$, since $M(a_2) \cap M(a_1) = \emptyset$, or else  $a_2$ would be in the span of $B'_{LD} \cup \{a_1\}$. Continuing in  this manner (i.e., picking $a_3,a_4,\ldots$) for at most $K$ steps or until we run out of elements of $H$ (which has size $\Omega(|A|/K)$) we obtain a subset $A' \subset A$ of dimension at most  $O(K^2)+ K = O(K^2)$ containing at least $\Omega(A/K)$ elements. This completes the proof of the claim.
\end{proof}

\begin{claim}
We have $\dim(A) \leq O(K^2)$.
\end{claim}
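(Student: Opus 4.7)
The plan is to extend the conclusion of Claim~\ref{cla-freiman1} from a large subset of $A$ to all of $A$ via a greedy dimension-filling argument. First I would take $A_0 = A'$ from the previous claim, so $|A_0| \geq \Omega(|A|/K)$ and $\dim(A_0) \leq O(K^2)$, and set $V_0 = \textsf{span}(A_0)$.

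The key structural fact I would establish is that for any subspace $V \subseteq \C^d$ containing $A_0$ and any $a \in A \setminus V$, the ``translate'' $f(a, A_0) := \{f(a,a') : a' \in A_0\}$ has exactly $|A_0|$ elements and is entirely disjoint from $V$. Injectivity comes from rewriting $f(a,a_1') = f(a,a_2')$ as $(\alpha_1 - \alpha_2)\,a = (1-\alpha_2)\,a_2' - (1-\alpha_1)\,a_1'$: if $\alpha_1 = \alpha_2$ this forces $a_1' = a_2'$ (using $\alpha \neq 1$), and otherwise it forces $a \in \textsf{span}(A_0) \subseteq V$, contradicting $a \notin V$. Disjointness comes from the observation that $f(a,a') = \alpha a + (1-\alpha)a' \in V$ combined with $a' \in A_0 \subseteq V$ would force $\alpha a \in V$, hence $a \in V$ since $\alpha \neq 0$.

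Next I would build a flag $V_0 \subsetneq V_1 \subsetneq \cdots \subsetneq V_r$ greedily: at step $i$, if $A \subseteq V_{i-1}$ the process stops, otherwise pick any $a_i \in A \setminus V_{i-1}$ and set $V_i = \textsf{span}(V_{i-1} \cup \{a_i\})$. Each step raises the dimension by one, so $r = \dim(A) - \dim(A_0)$. Applying the structural fact with $V = V_{i-1}$, each $f(a_i, A_0)$ has size $|A_0|$ and is disjoint from $V_{i-1}$; since $f(a_j, A_0) \subseteq V_j \subseteq V_{i-1}$ for every $j < i$, the sets $f(a_1, A_0), \ldots, f(a_r, A_0)$ are pairwise disjoint subsets of $A \fplus A$. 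Combined with the hypothesis $|A \fplus A| \leq K|A|$, this yields
\begin{equation*}
K|A| \;\geq\; r \cdot |A_0| \;\geq\; r \cdot \Omega(|A|/K),
\end{equation*}
so $r \leq O(K^2)$ and therefore $\dim(A) = \dim(A_0) + r \leq O(K^2)$.

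The only delicate point is the pairwise disjointness of the translates $f(a_i, A_0)$, which is exactly what the flag construction is designed to guarantee; the remaining arguments are elementary linear algebra, resting crucially on $\alpha \notin \{0,1\}$. I do not expect any serious obstacle, since the heavy lifting has already been done in Claim~\ref{cla-freiman1} and this step is essentially a clean greedy extension.
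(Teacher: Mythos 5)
Your proof is correct and is essentially the paper's argument in sequential form: the paper picks a minimal set $T \subset A \setminus A'$ with $\textsf{span}(T \cup A') \supseteq A$ and shows the values $f(a,a')$ for $a \in T$, $a' \in A'$ are distinct elements of $A \fplus A$, giving $|T|\cdot|A'| \leq |A \fplus A| \leq K|A|$ and hence $|T| \leq O(K^2)$, which is exactly your bound $r \cdot |A_0| \leq K|A|$ with your greedy flag playing the role of $T$. If anything, your version spells out the injectivity of $a' \mapsto f(a,a')$ (using $\alpha \notin \{0,1\}$) a bit more explicitly than the paper does, but the underlying counting idea is the same.
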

\begin{proof}
Let $A' \subset A$ be a subset of size $\Omega(|A|/K)$ and dimension $O(K^2)$ given by the previous claim. Let  $T$ be a minimal set in $A \setminus A'$ for which $\textsf{span}(T \cup A')$ contains $A$. Notice that this implies that the points in $T$ are linearly independent and that  $$\dim(A' \cup T)= \dim(A') + |T|.$$

Observe that for every  $a_1 \neq a_2 \in T$, there do not exist $a_1' \neq a_2' \in A'$ and $b \in B$ such that $$f(a_1, a_1') = f(a_2, a_2') = b.$$ This is because otherwise  $\{a_1, a_2\} \subseteq \textsf{span}(A' \cup \{b\})$, which means that $\dim(\{a_1,a_2\} \cup A') \leq \dim(A') + 1$, which violates the properties of $T$. Therefore, $$|T| \cdot |A'| \leq |B|.$$ This gives $|T|  \leq O(K^2)$ and so $\dim(A) \leq \dim (A') + T = O(K^2).$
\end{proof}

\begin{cor}
If a set $A \subset \R^n$ defines at most $K|A|$ directions then $\dim(A) \leq O(K^{2})$
\end{cor}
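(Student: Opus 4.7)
The plan is to reduce to Theorem~\ref{thm-freiman-new} by a standard projective trick: the directions determined by $A$ are exactly the projective points at infinity lying on the projective closures of the lines through pairs in $A$, and by applying a generic projective transformation of $\P^n(\C)$ we can pull each direction-point into a common affine chart $\C^n$ and then use it as the value of a function $f$ satisfying the hypotheses of Theorem~\ref{thm-freiman-new}. After this move the image of $f$ will be \emph{exactly} the (transformed) set of directions, which has size at most $K|A|$.

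Concretely, I would embed $A\subset\R^n\subset\C^n$ and translate so that $0\in A$ (which leaves the set of directions unchanged and makes linear and affine dimension agree). Let $D$ denote the set of distinct directions determined by $A$, so $|D|\leq K|A|$. Lift each $a\in A$ to $[a:1]\in\P^n(\C)$ and each $d\in D$ to its point at infinity $[d:0]\in\P^n(\C)$. Then pick a projective transformation $T:\P^n(\C)\to\P^n(\C)$ such that every image $T([a:1])$ and every image $T([d:0])$ lies in a fixed affine chart $\C^n$ and no two of these $|A|+|D|$ images coincide; such a $T$ exists because we need only avoid a proper Zariski-closed subset of the space of projective transformations.

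Set $A':=\{T([a:1]):a\in A\}\subset\C^n$, and for each ordered pair $a_1\neq a_2$ in $A$ define
\[
f\bigl(T([a_1:1]),\,T([a_2:1])\bigr) \;:=\; T([a_1-a_2:0]).
\]
Since $T$ maps projective lines to projective lines and $[a_1-a_2:0]$ lies on the projective line through $[a_1:1]$ and $[a_2:1]$, the value $f(a_1',a_2')$ lies on the affine line in $\C^n$ through $a_1',a_2'$ and is distinct from both; hence $f(a_1',a_2')=\alpha a_1'+(1-\alpha)a_2'$ for some $\alpha\in\C\setminus\{0,1\}$, matching the hypotheses of Theorem~\ref{thm-freiman-new}. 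Moreover $f(a_1',a_2')$ depends only on the projective direction $[a_1-a_2]\in D$, so
\[
|A'\fplus A'|\;\leq\;|D|\;\leq\; K|A|\;=\;K|A'|.
\]

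Applying Theorem~\ref{thm-freiman-new} to $A'$ gives $\dim(A')\leq O(K^2)$, and since an invertible projective transformation preserves the projective dimension of any set of finite affine points, $\dim(A)=\dim(A')\leq O(K^2)$. There is no serious obstacle: the entire argument is a projective reformulation that converts ``directions'' into ``points,'' and Theorem~\ref{thm-freiman-new} does all of the real work. The only bookkeeping to be careful about is the genericity of $T$---ensuring that no direction image collides with a point image and that nothing lands on the new hyperplane at infinity---which is a routine open-condition argument.
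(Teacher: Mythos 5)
Your proof is correct, and it reaches the conclusion by a genuinely different mechanism than the paper, even though both rest on the same identification of directions with points at infinity on the lines through pairs of $A$. The paper disposes of the corollary with a two-line remark: it asserts that the \emph{proof} of Theorem~\ref{thm-freiman-new} goes through verbatim if $f$ is allowed to take values in projective space, and then takes $f(a,b)$ to be the point at infinity of the line through $a,b$, i.e.\ the direction $a-b$, so that $|A\fplus A|$ is the number of directions. You instead keep Theorem~\ref{thm-freiman-new} as a black box: you apply a generic projectivity of $\P^n(\C)$ sending a hyperplane that misses the lifted points $[a:1]$ and the direction points $[d:0]$ to the hyperplane at infinity, so that all of these land in one affine chart, and then define $f$ on the transformed set $A'$ by the transformed direction point; since projectivities carry lines to lines and are injective, $f(a_1',a_2')$ lies on the affine line through $a_1',a_2'$ and differs from both, and its image has size at most $|D|\leq K|A'|$. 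What your route buys is that you never have to re-inspect the proof of Theorem~\ref{thm-freiman-new} (in particular Corollary~\ref{cor:SGav2}, which is stated for points and special lines in $\C^d$) to confirm it tolerates values at infinity --- precisely the verification the paper's remark glosses over --- at the price of the routine genericity argument and a small amount of bookkeeping: injectivity of the $|A|+|D|$ images is automatic from invertibility of $T$, the diagonal values $f(a,a)$ should be fixed harmlessly (they add at most $|A|$ points to the image, absorbed into $O(K)$), and what the projectivity preserves is the affine (projective-span) dimension, so $\dim(A)$ and $\dim(A')$ agree only up to $\pm1$, which is swallowed by the $O(K^2)$.
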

\begin{proof}
Notice that the proof of the above theorem works also if the function $f$ is allowed to take values in projective space. Since the point at infinity on the line through $a,b$ is the direction $a-b$ we get the required consequence.
\end{proof}

%%%%%%%%%%%%%%%%%%%%%%%%%%%%%%%%%%%%%%%%%%%%%%%%%%%%%%%%%%%%
%%%%%%%%%%%%%%%%%%%%%%%%%%%%%%%%%%%%%%%%%%%%%%%%%%%%%%%%%%%%
%%%%%%%%%%%%%%%%%%%%%%%%%%%%%%%%%%%%%%%%%%%%%%%%%%%%%%%%%%%%
%%%%%%%%%%%%%%%%%%%%%%%%%%%%%%%%%%%%%%%%%%%%%%%%%%%%%%%%%%%%
%%%%%%%%%%%%%%%%%%%%%%%%%%%%%%%%%%%%%%%%%%%%%%%%%%%%%%%%%%%%
%%%%%%%%%%%%%%%%%%%%%%%%%%%%%%%%%%%%%%%%%%%%%%%%%%%%%%%%%%%%
%%%%%%%%%%%%%%%%%%%%%%%%%%%%%%%%%%%%%%%%%%%%%%%%%%%%%%%%%%%%

%%%%%%%%%%%%%%%%%%%%%%%%%%%%%%%%%%%%%%%%%%%%%%%%%5
\bibliographystyle{alpha}

\bibliography{optrb}
%%%%%%%%%%%%%%%%%%%%%%%%%%%%%%%%%%%%%%%%%%%%%%%%%%%

\end{document}